\theoremstyle{plain}
\newtheorem{thm}{Theorem}[section]
\newtheorem{prop}[thm]{Proposition}
\newtheorem{lem}[thm]{Lemma}
\theoremstyle{definition}
\newtheorem{example}[thm]{\sc Example} 
\newtheorem{rem}[thm]{\sc Remark}
\newcommand{\Comp}{\mbox{${\mathbb C}$}}
\newcommand{\bfalpha}{\mbox{\boldmath $\alpha$}}
\newcommand{\bfbeta}{\mbox{\boldmath $\beta$}}
\newcommand{\bfLambda}{\mbox{\boldmath $\Lambda$}}
\newcommand{\ssupset}{\mbox{$\vartriangleright$}}
\newcommand{\ssubset}{\mbox{$\vartriangleleft$}}
\newcommand{\abox}{\mbox{\tiny\yng(1)}}
\newcommand{\horibox}{\mbox{\tiny\yng(2)}}
\newcommand{\vertbox}{\mbox{\tiny\yng(1,1)}}
\newcommand{\fig}[3]{
	\begin{figure}[htbp]
		\begin{center}
			\includegraphics{#3}	
			\caption{#2}
			\label{#1}
		\end{center}
	\end{figure}
}
\begin{document}

\title{\bf Partition Algebra,\\
Its Characterization and Representations
}

\author{Masashi KOSUDA}

\date{}
\maketitle

\begin{center}
{\bf Abstract}
\end{center}

In this note we give representations for
the partition algebra $A_{3}(Q)$ in Young's seminormal form.
For this purpose, we also give characterizations
of $A_{n}(Q)$ and $A_{n-\frac{1}{2}}(Q)$.



\section{Introduction}
\subsection{Definition of the partition algebra}
Let $M = \{1, 2, \ldots, n\}$ be
a set of $n$ symbols and $F = \{1', \ldots, n'\}$ another set
of $n$ symbols.
We assume that the elements of $M$ and $F$
are ordered by $1<2<\cdots <n$ and $1'<2'<\cdots<n'$ respectively. 
Consider the following set of set partitions:
\begin{eqnarray}
\Sigma_n^1
&=&
\{\{T_1, \ldots, T_s\}\ |\ s=1,2, \ldots \ ,\nonumber\\
& &	\quad T_j(\neq\emptyset)\subset M\cup F\ 
	(j = 1, 2, \ldots, s),\\
& &	\quad \cup T_j = M\cup F,\quad T_i\cap T_j = \emptyset \mbox{ if }
	i\neq j\nonumber\}.
\end{eqnarray}
We call an element $w$ of $\Sigma_n^1$ {\em a seat-plan}
and each element of $w$ a {\em part} of $w$.
It is easy to see that the number of seat-plans is equal
to $B_{2n}$, the Bell number.

For $w\in\Sigma_n^1$
consider a rectangle with $n$ marked points on the bottom and the same
$n$ on the top as in Figure~\ref{fig:plan}.
\fig{fig:plan}{A seat-plan of $\Sigma_5$}{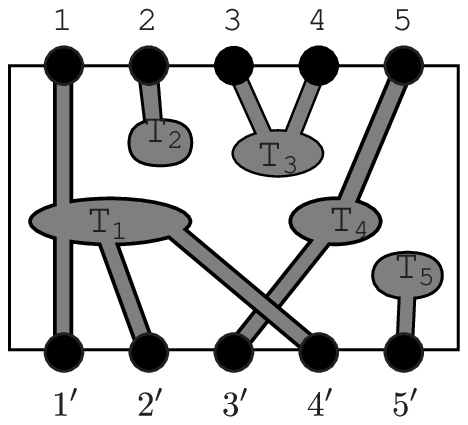}
The $n$ marked points on the top are labeled by
$1, 2, \ldots n$
from left to right.
Similarly, the $n$ marked points on the bottom are labeled by
$1', 2', \ldots, n'$.
If $w$ consists of $s$ parts,
then put $s$ shaded circles
in the middle of the rectangle
so that they have no intersections.
Then we join the $2n$ marked points and the $s$ circles
with $2n$ shaded bands so that
each shaded circle represent a part of $w$.

Using these diagrams, for $w_1,w_2\in\Sigma_n^1$,
an arbitrary pair of seat-plans,
we can define a product $w_1 w_2$.
The product is obtained by placing $w_1$ on $w_2$,
gluing the corresponding boundaries
and shrinking half along the vertical axis.
We then have a new diagram possibly containing some
shaded regions which are not connected to the boundaries.
If the resulting diagram has $p$ such regions,
then the product is defined by
the diagram with such region removed
and multiplied by $Q^p$.
Here $Q$ is an indeterminate.
(It is easily checked that the product defined above is closed
in the linear span of the set of seat-plans $\Sigma_n^1$
over $\mathbb{Z}[Q]$.)
For example, if
\[
	w_1 = \{\{1,1',4'\}, \{2,5\},\{3,4\}, \{2'\},\{3',5'\}\}\in\Sigma_5^1
\]
and
\[
	w_2 = \{\{1,1',3',4'\}, \{2\}, \{3,5\}, \{4\}, \{2',5'\}\}\in\Sigma_5^1,
\]
then we have
\[
	w_1w_2 =
	Q^2\{\{ 1,1',3',4'\}, \{2,5\}, \{3,4\}, \{2',5'\}\}
	\in \mathbb{Z}[Q]\Sigma_5^1
\]
as in Figure~\ref{fig:prod}.
\fig{fig:prod}{The product of seat-plans}{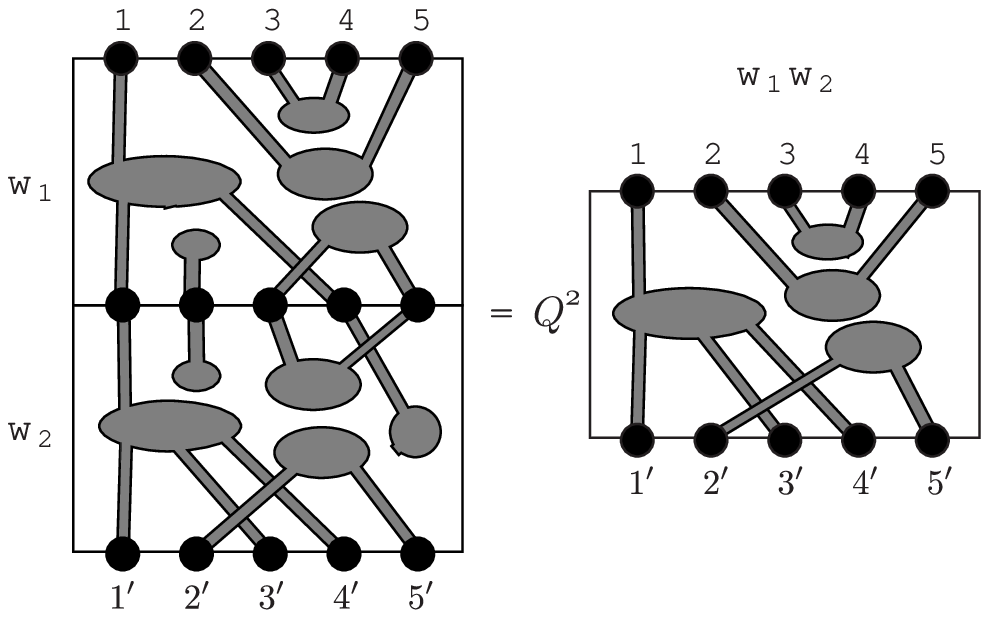}
By this product,
the set of linear combinations of the elements of $\Sigma_n^1$
over $\mathbb{Z}[Q]$ makes an algebra $A_{n}(Q)$
called the {\em partition algebra}.
The identity of $A_{n}(Q)$
is a diagram which corresponds to the partition
\[
1 = \{\{1, 1'\}, \{2, 2'\}, \ldots, \{n, n'\}\}.
\]
We put $A_{0}(Q) = A_{1}(Q)= \mathbb{Z}[Q]$.
We can define $A_{n}(Q)$ more rigorously in terms of the set partitions
(See P.~P.~Maritin's paper~\cite{Ma2}).

Next we define special elements $s_i, f_i$ ($1\leq i \leq n-1$)
and $e_i$ ($1\leq i\leq n$) of $\Sigma_n^1$
by
\begin{eqnarray*}
s_i &=& \{\{1,1'\},\ldots, \{i-1,(i-1)'\},
		\{i+2, (i+2)'\},\ldots, \{n, n'\},\\
	& &\quad \{i, (i+1)'\}, \{i+1, i'\}\}\\
f_i &=& \{\{1,1'\},\ldots, \{i-1,(i-1)'\},
		\{i+2, (i+2)'\},\ldots, \{n, n'\},\\
	& &\quad \{i, i+1, i', (i+1)'\}\}\\
e_i &=& \{\{1,1'\},\ldots, \{i-1,(i-1)'\}, \{i\}, \{i'\}
		\{i+1, (i+1)'\},\ldots, \{n, n'\}\}.
\end{eqnarray*}
The diagrams of these special elements are
illustrated by the figures in Figure~\ref{fig:gen}.
Note that in the picture of $e_i$,
there exist ``a male'' only part and
``a female'' only part.
We call such a part ``defective'' (see Section~3.1).
\fig{fig:gen}{Special elements}{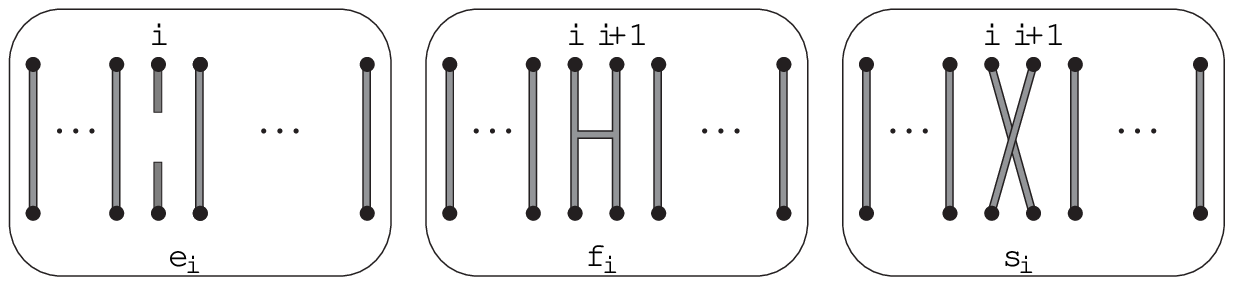}

We easily find that they satisfy the following basic relations.
\begin{equation}
	\begin{array}{rcl}\tag{$R0$}
	f_{i+1} &=& s_is_{i+1}f_is_{i+1}s_i \quad (i = 1, 2, \ldots, n-2),\\
	e_{i+1} &=& s_ie_{i}s_i \quad (i = 1, 2, \cdots, n-1)
	\end{array}
\end{equation}
\begin{equation}
	\begin{array}{rcl}\tag{$R1$}
	s_i^2 &=& 1 \quad (i = 1, 2, \ldots, n-1),\\
	s_is_{i+1}s_i &=& s_{i+1}s_is_{i+1} \quad (i = 1, 2, \ldots, n-2),\\
	s_is_j &=& s_js_i \quad (|i-j|\geq 2),
	\end{array}
\end{equation}
\begin{equation}
	f_i^2 = f_i,\ f_if_j = f_jf_i,\tag{$R2$}
\end{equation}
\begin{equation}
	f_is_i = s_if_i=f_i,\tag{$R3$}
\end{equation}
\begin{equation}
	f_is_j = s_jf_i \quad (|i -j|\geq 2),\tag{$R4$}
\end{equation}
\begin{equation}
	e_{i}^2 = Qe_i ,\tag{$E1$}
\end{equation}
\begin{equation}
	s_ie_{i}e_{i+1}
	= e_ie_{i+1}s_i
	= e_ie_{i+1}
		\quad (i = 1, 2, \ldots, n-1),\tag{$E2$}
\end{equation}
\begin{equation}
	e_is_j = s_je_i
		\quad(j-i\geq1,\ i-j\geq 2),
	\quad e_ie_j
		= e_je_i,\tag{$E3$}
\end{equation}
\begin{equation}
	\begin{array}{rcl}\tag{$E4$}
	e_if_ie_i = e_i 
	& e_{i+1}f_{i}e_{i+1} = e_{i+1}
	& (i = 1, 2, \ldots, n-1),\\
	f_ie_{i}f_i = f_{i},
	&f_{i}e_{i+1}f_{i} = f_{i}
	&(i = 1, 2, \ldots, n-1),
	\end{array}
\end{equation}
\begin{equation}
	e_if_j = f_je_i
		\quad(j-i\geq1,\ i-j\geq 2).\tag{$E5$}
\end{equation}
Here we make a remark on the special elements above.
\begin{rem}\label{rem:gen}
The relation $(R0)$ implies that the special elements
$\{f_i\}$
and $\{e_i\}$
are generated by $f=f_1$, $e = e_1$ and $s_1,\ldots, s_{n-1}$.
\end{rem}

In this note, firstly we show that
the special elements and the basic relations ($R0$)-($R4$) 
and ($E1$)-($E5$) above characterize
the partition algebra $A_{n}(Q)$, {\em i.e.}
the special elements generate $A_{n}(Q)$,
and all the possible relations in $A_{n}(Q)$ are
obtained from the basic relations.
By Remark~\ref{rem:gen},
the basic relations will be translated into the relations
among the symbols $f$, $e$ and $s_i$s.
Characterizations will be stated by these symbols.

\subsection{Characterization for $A_{n}(Q)$}
Since generators $\{s_i\ | \ 1\leq i\leq n-1\}$
of the partition algebra $A_{n}(Q)$
satisfy the relations of the symmetric group ${\mathfrak S}_n$,
we can understand that $f_i$ and $e_i$ are ``conjugate'' to $f$ and $e$
respectively.

Hence the basic relations $(R2)$-$(R4)$ and $(E1)$-$(E5)$
among the special elements
are translated into the relations $(R2')$-$(R4')$
and $(E1')$-$(E5')$ among the generators
as follows.
\begin{thm}\label{th:main}
The partition algebra $A_{n}(Q)$ is
characterized by the generators
\begin{equation*}
	f, e, s_1, s_2, \ldots, s_{n-1},
\end{equation*}
and the relations
\begin{equation}
	\begin{array}{rcl}\tag{$R1$}
	s_i^2 &=& 1 \quad (i = 1, 2, \ldots, n-1),\\
	s_is_{i+1}s_i &=& s_{i+1}s_is_{i+1} \quad (i = 1, 2, \ldots, n-2),\\
	s_is_j &=& s_js_i \quad (|i-j|\geq 2, \ i, j =1,2, \ldots, n-1),
	\end{array}
\end{equation}
\begin{equation}
	f^2 = f,\ fs_2fs_2 = s_2fs_2f,\ fs_2s_1s_3s_2fs_2s_1s_3s_2 = s_2s_1s_3s_2fs_2s_1s_3s_2f,\tag{$R2'$}
\end{equation}
\begin{equation}
	fs_1 = s_1f=f,\tag{$R3'$}
\end{equation}
\begin{equation}
	fs_i = s_if \quad (i = 3, 4, \ldots, n-1),\tag{$R4'$}
\end{equation}
\begin{equation}
	e^2 = Qe,\tag{$E1'$}
\end{equation}
\begin{equation}
	es_1es_1 = s_1es_1e = es_1e,\tag{$E2'$}
\end{equation}
\begin{equation}
	es_i=s_ie \quad (i = 2, 3, \ldots, n-1),\tag{$E3'$}
\end{equation}
\begin{equation}
	efe = e,\ fef = f,\tag{$E4'$}
\end{equation}
\begin{equation}
	fs_2s_1es_1s_2
	= s_2s_1es_1s_2f.\tag{$E5'$}\end{equation}
\end{thm}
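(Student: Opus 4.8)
The plan is to establish Theorem~\ref{th:main} by exhibiting a two-sided correspondence between the abstractly presented algebra $\tilde{A}_n(Q)$ (generated by $f,e,s_1,\ldots,s_{n-1}$ subject to $(R1),(R2'),(R3'),(R4'),(E1'),(E2'),(E3'),(E4'),(E5')$) and the concrete diagram algebra $A_n(Q)$. There is an obvious surjection $\varphi\colon\tilde{A}_n(Q)\twoheadrightarrow A_n(Q)$ sending each abstract generator to the corresponding special seat-plan, provided we first check that the listed relations actually hold among those diagrams; this verification is a finite diagram calculation, and the point of the translation preceding the theorem is precisely that $(R2')$--$(E5')$ are the $s$-conjugates of $(R2)$--$(E5)$ (using Remark~\ref{rem:gen} to recover $f_i=s_{i-1}\cdots f\cdots s_{i-1}$ and $e_i=s_{i-1}\cdots s_1 e s_1\cdots s_{i-1}$ via $(R0)$). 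So the real content is that $\varphi$ is injective, equivalently that $\tilde{A}_n(Q)$ is spanned over $\mathbb{Z}[Q]$ by at most $B_{2n}$ elements, since $A_n(Q)$ has the $B_{2n}$ seat-plans as a $\mathbb{Z}[Q]$-basis.

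First I would use the relations to reintroduce the elements $f_i$ and $e_i$ in $\tilde{A}_n(Q)$ by the formulas of $(R0)$ and re-derive the full relation set $(R2)$--$(E5)$ for all indices from the three-index presentation; here the braid relations $(R1)$ do the conjugating, and the only slightly delicate relations are $(R2)$ (commutation $f_if_j=f_jf_i$ for $|i-j|\ge 2$ and the ``non-adjacent braid-like'' consequences) and $(E2)$, $(E4)$, $(E5)$, all of which should follow by conjugating the two- and three-generator relations already assumed. Next I would set up a normal form for words in the $f_i,e_i,s_i$. The natural strategy, following the diagram picture, is to show every element of $\tilde{A}_n(Q)$ is a $\mathbb{Z}[Q]$-linear combination of ``standard'' products of the form (a product of $e_i$'s encoding the defective parts on top) $\times$ (a word in the $f_i,s_i$ encoding the horizontal part-structure) $\times$ (a product of $e_j$'s encoding the defective parts on the bottom), with the middle factor running over a transversal for the subalgebra generated by $f$'s and $s$'s — this middle piece is essentially the rook-monoid/partition-monoid-without-singletons part, already understood. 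One then counts: the number of such standard words matches $B_{2n}$ by a bijection with seat-plans, where a seat-plan is reconstructed from which top/bottom points are singletons (the $e$-data) and how the remaining points are grouped and matched (the $f,s$-data).

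The spanning/straightening argument is where I expect the main obstacle to lie: one must show that an arbitrary product of generators can be pushed into the standard form using only $(R0)$--$(E5)$. Concretely, one needs ``absorption'' and ``commutation past'' lemmas — e.g. $e_i$ absorbs an adjacent $e_{i\pm1}$ or $f_{i-1},f_i$ down to a power of $Q$ times a shorter word (from $(E1),(E2),(E4)$), $f_i$'s multiply idempotently and reorganize via $(R2),(R3)$, and $s_i$'s either get swallowed by a neighboring $f_i$ or $e_i$ (via $(R3),(E2)$, noting $e_i$ is $s_i$-invariant from $(E2)$) or moved to the outside by $(R1),(R4),(E3)$. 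The bookkeeping that this process terminates and always lands in the claimed spanning set — i.e. a confluence/Newman's-lemma style argument, or an explicit induction on word length with a carefully chosen complexity measure — is the technical heart. For $n=2,3$ (the cases actually needed in the sequel) the spanning set is small enough that one can, if necessary, verify confluence by an essentially exhaustive case check on how each generator acts on each standard word, and I would fall back on that if a clean general argument proves elusive.

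Finally, once $\tilde{A}_n(Q)$ is shown to be spanned by $\le B_{2n}$ elements, the composite count forces $\varphi$ to be a bijection on a basis, hence an isomorphism, proving that the given generators and relations characterize $A_n(Q)$. I would also remark that the relations in the statement are genuinely among the $f,e,s_i$ only (the $f_i,e_i$ having been eliminated via Remark~\ref{rem:gen}), which is what makes the presentation finite and uniform in $n$.
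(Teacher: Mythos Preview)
Your high-level architecture matches the paper's: define the abstract algebra $\widetilde{A_n(Q)}$, get the evident surjection onto $A_n(Q)$, and prove injectivity by showing every word reduces (using only the listed relations) to one of at most $B_{2n}$ normal forms, then invoke the known basis of $A_n(Q)$. The paper does exactly this, carrying it out for general $n$ (not just $n\le 3$) by induction on word length via the implications ``$w$ standard $\Rightarrow s_iw,\,fw,\,ew$ can be brought back to standard'' (their Propositions~4.5--4.7), after first showing all crank-form expressions of a fixed seat-plan are interconvertible (Lemmas~4.1--4.3, Proposition~4.4).

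Where your sketch departs from the paper is the choice of normal form. You propose a layered shape $(\text{top }e\text{'s})\times(\text{middle }f,s\text{ word})\times(\text{bottom }e\text{'s})$, but this does not quite work as stated: $e_i$ produces a singleton on \emph{both} boundaries simultaneously, so one cannot separately encode ``top defects'' and ``bottom defects'' by bare products of $e$'s. The paper's normal form --- their \emph{crank form expression} $x_{\overline{\mathbb M}}\,C[\mathbb M]\,C^{\mathbb M}_{\mathbb F}[\sigma]\,C^*[\mathbb F]\,x^*_{\overline{\mathbb F}}$ --- handles this by weaving the $e$'s and $f$'s together inside each ``crank'' $C_{\mathbb M}[l]$, with the propagating and defective parts ordered by their minimal elements. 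That extra structure is what makes the straightening argument go through uniformly in $n$, and it also gives the cellular structure for free. Also, one small slip: $e_i$ is \emph{not} $s_i$-invariant (from $(R0)$ one has $s_ie_i = e_{i+1}s_i$, not $s_ie_i=e_i$); the relation $(E2)$ only absorbs $s_i$ in the presence of the pair $e_ie_{i+1}$. None of this is fatal to your plan, but it means the confluence/straightening step needs a more carefully engineered target than the one you wrote down.
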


In Sections~2-4
we prove this theorem not using
the generators and the relations in the theorem
but using the special elements and the basic relations $(R0)$-$(R4)$
and $(E1)$-$(E5)$.

The partition algebras $A_n(Q)$ were introduced in early 1990s
by Martin~\cite{Ma1,Ma2} and Jones~\cite{Jo} independently
and have been studied, for example, in the papers~\cite{Ma3,DW,HR}. 
The theorem above has already shown in the paper~\cite{HR}.
Here we give another poof defining a ``standard'' expression of
a word of the special elements of $A_{n}(Q)$
according to the papers~\cite{Ko1,Ko3,Ko4}.
From this standard expression,
we will find that the partition algebra $A_{n}(Q)$
is cellular in the sense of Graham and Lehrer~\cite{GL}.
Thus, applying the general representation of cellular
algebras to the partition algebras,
we will get a description of the irreducible
modules of $A_{n}(Q)$ for any field of arbitrary characteristic.
(For the cell representations, we also refer the paper~\cite{KL}.)

Further, we can make the character table of $A_{n}(Q)$
using the standard expressions.
These topics will be studied in near future.
For the present we refer the notes~\cite{Ko3,Na1}
and the results about the partition algebras~\cite{DW,Xi}.

\section{Local moves deduced from the basic relations}

Let
\[
{\cal L}_n^1 =
\{
	s_1, s_2, \ldots, s_{n-1},
	f_1, f_2, \ldots, f_{n-1},
	e_1, e_2, \ldots, e_{n}
\}
\]
be the set of symbols
whose words satisfy the basic relations $(R0)$-$(R4)$
and $(E1)$-$(E5)$.
There are many relations among these symbols
which are deduced from the basic relations.
These relations are pictorially
expressed as local moves.
Among them, we frequently use relations
$f_{i+1}s_is_{i+1}=s_is_{i+1}f_i$ ($R0$), 
$f_is_{i+1}f_i = f_if_{i+1}$ ($R2''$)
and
$e_is_i = e_if_ie_{i+1} =s_ie_{i+1}$ ($E4''$)
as in Figure~\ref{fig:fss},\ref{fig:fsf} and \ref{fig:efe}
respectively.
The latter two relations are deduced from the relations ($R0$)-($R3$)
and ($R0$), ($R3$), ($E4$) respectively.
\fig{fig:fss}{$f_{i+1}s_is_{i+1}=s_is_{i+1}f_i$ ($R0$)}{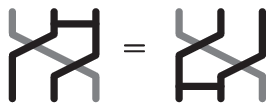}
\fig{fig:fsf}{$f_is_{i+1}f_i = f_if_{i+1}$ ($R2''$)}{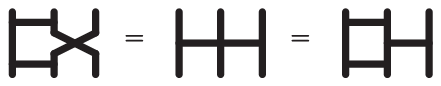}
\fig{fig:efe}{$e_is_{i}=e_if_ie_{i+1} = s_ie_{i+1}$ ($E4''$)}{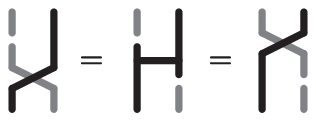}

As we noted in the previous paper~\cite{Ko4},
these basic relations
are invariant under the transpositions of indices
$i\leftrightarrow n-i+1$ as well as
the $\mathbb{Z}[Q]$-linear involution $*$ defined by
$(xy)^{*} = y^*x^*$ ($x, y\in A_{n}(Q)$).
This implies that if one local move is allowed
then other three moves ---obtained by
reflections with respect to the vertical and the horizontal lines
and their composition---
are also allowed.

Further, we note that if we put
\[
	e^{[r]} = f_1f_2\cdots f_{r-1}e_1e_2\cdots e_rf_1f_2\cdots f_{r-1}
\]
then we can check that $e^{[r]}$, $f$ and $s_i$ ($1\leq i\leq n-1$)
satisfy the defining relations of $P_{n,r}(Q)$,
the $r$-modular party algebra, defined in the paper~\cite{Ko4}.
This means that the local moves shown in the paper~\cite{Ko4}
also hold in $A_n(Q)$
(in fact, these local moves are more easily verified
in $A_n(Q)$).
Some of them are pictorially expressed
in Figures~\ref{fig:dpjrE},\ref{fig:roeE},\ref{fig:dpsE}
and \ref{fig:dpeE}.
\fig{fig:dpjrE}{Defective part jump rope\ ($R13'$)}{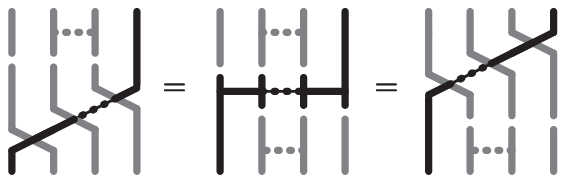}
\fig{fig:roeE}{Removal (addition) of excrescences\ ($R14'$)}{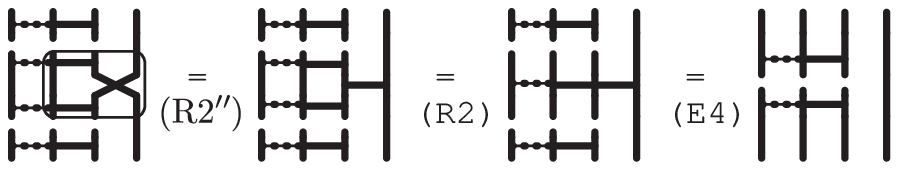}
\fig{fig:dpsE}{Defective part shift\ ($R16'$)}{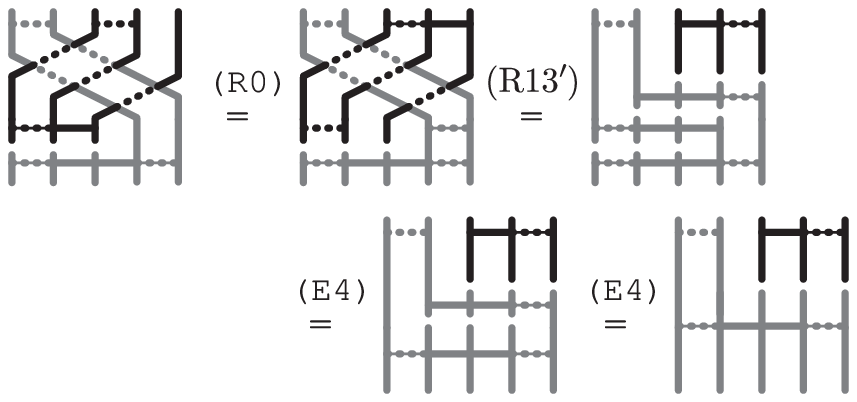}
\fig{fig:dpeE}{Defective part exchange\ ($R17'$)}{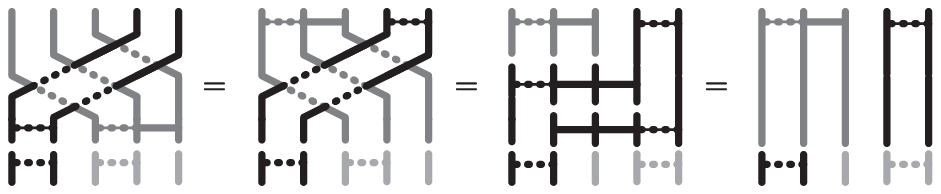}

\section{Standard expressions of seat-plans}

In this section, for a seat-plan $w$ of $\Sigma_n^1$,
we define a {\em basic expression},
as a word in the alphabet $\Gamma_n^1$.
Then we define more general forms
called {\em  crank form expression}s.
As a special type of the crank form expression,
we define the {\em standard expression}.
In the next section,
we show that any
two crank form expressions of a seat-plan
will be moved to each other by using the basic relations
$(R0)$-$(R4)$ and $(E1)$-$(E5)$
finite times.
Consequently, we find that
any seat-plan can be moved to its standard expression.
To define these expressions,
we introduce some terminologies.

\subsection{Propagating number}

Let $w = \{T_1, T_2, \ldots, T_s\}$
be a seat-plan of $A_{n}(Q)$.
For a part $T_i\in w$, the intersection with $M$, or $T_i^M = M\cap T_i$,
is called the {\em upper part} of $T_i$.
Similarly, $T_i^F = F\cap T_i$
is called the {\em lower part} of $T_i$.
If $T_i^M\neq\emptyset$ and $T_i^F\neq\emptyset$ hold
simultaneously,
$T_i$ is called {\em propagating},
otherwise, it is called {\em non-propagating},
or {\em defective}.
Let
$\pi(w) := \{T\in w\ |\ T:\mbox{propagating}\}$
be the set of propagating parts.
The number of propagating parts $|\pi(w)|$ is
called the {\em propagating number} (of $w$).
If $T_i\in\pi(w)$ then the upper [resp. lower]
part $T^M_i$ [resp. $T^F_i$] of $T_i$ is also called {\em propagating}.
If $T_i\in w\setminus\pi(w)$ and $T^M_i = T_i$ [resp. $T^F_i = T_i$],
then $T^M_i$ [resp. $T^F_i$] is called {\em defective}.

For example, in Figure~\ref{fig:plan},
$\pi(w) = \{T_1, T_4\}$.
Hence $|\pi(w)| = 2$.
On the other hand $T_2$, $T_3$ and $T_5$ are defective.
The upper and the lower propagating parts are $\{1\}$, $\{5\}$
and $\{1',2',4'\}$, $\{3'\}$ respectively.
The upper defective parts are $T_2$ and $T_3$.
The lower defective part is $T_5$.

\subsection{A basic expression of a seat-plan}

For a part $T_i\in w$, define $t(T_i)$ by
\[
	t(T_i) =
	\left\{
	\begin{array}{ll}
		1&\mbox{if $T_i$ is propagating,}\\
		0&\mbox{if $T_i$ is defective.}
	\end{array}
	\right.
\]
Similarly we define $t(T_i^M)$ [resp. $t(T_i^F)$]
to be $1$ or $0$ in accordance with that $T_i^M$ [resp. $T_i^F$] is propagating
or not.


Using the terminologies above,
first we define a {\em basic expression} of an seat-plan.
Let $w\in\Sigma_n^1$ be a seat-plan
and $\rho_w = (T_1, \ldots, T_s)$ be an arbitrary sequence of all parts of $w$.
For the sequence $\rho_w$,
we define the sequence of the upper [resp. lower]
parts $\mathbb{M} = \mathbb{M}(\rho_w) = (T^M_{i_1}, \ldots, T^M_{i_u})$
 ($i_1<\cdots<i_u$, $u\leq s$)
[resp. $\mathbb{F} = \mathbb{F}(\rho_w) = (T^F_{j_1}, \ldots, T^F_{j_v})$
 ($j_1<\cdots<j_v$, $v\leq s$)] omitting empty parts.

Using these data, we define {\em cranks}
$C_{\mathbb{M}}[i]$, $C^*_{\mathbb{F}}[i]$
and $C^{\mathbb{M}}_{\mathbb{F}}[\sigma])$
as products of the generators as in
Figure~\ref{fig:mcrank},
\ref{fig:fcrank}
and \ref{fig:midcrank}
respectively.
Here $\sigma$ is a word in the alphabet 
$\{s_1,\ldots, s_{|\pi(w)|-1}\}$.
\fig{fig:mcrank}{$C_{\mathbb{M}}[l]$}{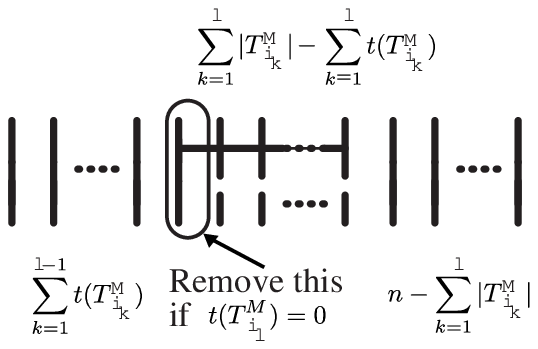}
\fig{fig:fcrank}{$C^*_{\mathbb{F}}[l]$}{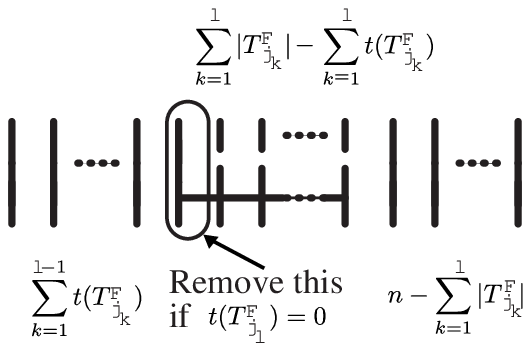}
\fig{fig:midcrank}{$C^{\mathbb{M}}_{\mathbb{F}}[\sigma]$}{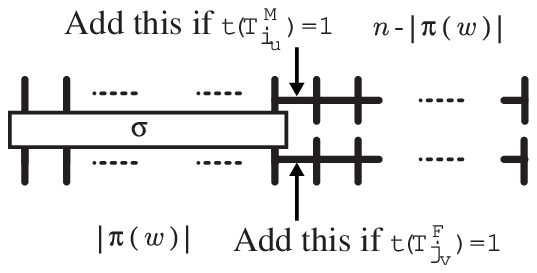}

Further we define the ``product of cranks''
${C}[\mathbb{M}]$ and  ${C}[\mathbb{F}]$
by
\[
	{C}[\mathbb{M}] = 
	C_{\mathbb{M}}[1]C_{\mathbb{M}}[2]\cdots C_{\mathbb{M}}[u-1]
\]
and
\[
	{C}^*[\mathbb{F}] =
	C^*_{\mathbb{F}}[v-1]\cdots C^*_{\mathbb{F}}[2]C^*_{\mathbb{F}}[1]
\]
respectively.
We note that $C_{\mathbb{M}}[l]$ [resp. $C^*_{\mathbb{F}}[l]$]
is defined by a composition $\mathbb{E} = (E_1,\ldots, E_s)$ of $n$
whose components have labels either ``propagating'' or ``defective''.
For example if $\mathbb{M} = (2,1,2,2,3)$,
$(t(M_i))_{1\leq i \leq 5} = (0,1,0,1,1)$,
$\mathbb{F} = (3,4,3)$, $(t(F_i))_{i=1,2,3} = (1,1,1)$
and $\sigma=(1,2)(2,3)\in\mathfrak{S}_3$,
then the product of cranks
${C}[\mathbb{M}]C^{\mathbb{M}}_{\mathbb{F}}[\sigma]C^*[\mathbb{F}]$
is presented as in Figure~\ref{fig:crank}.
\fig{fig:crank}{Product of cranks}{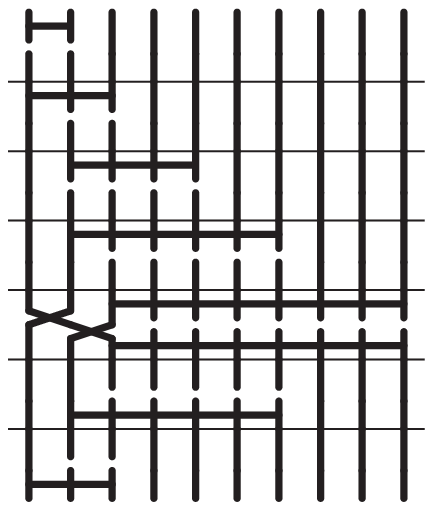}

Let $\overline{\mathbb M}$
be the sequence of $n$ symbols obtained from
$\mathbb{M} = \mathbb{M}(\rho_w)$ by
arranging all elements of $T^M_{i_k}$s
in accordance with the sequence $\mathbb{M}$
so that all elements of each $T^M_{i_k}$ are increasingly lined up
from left to right.
For example, if $\mathbb{M} = (\{3,1,7\},\{6,4\},\{5,2\})$,
then $\overline{\mathbb{M}} = (1,3,7,4,6,2,5)$.
Similarly $\overline{\mathbb{F}}$ is defined
from $\mathbb{F} = \mathbb{F}(\rho_w)$.

Then the following product becomes
an expression of a seat-plan $w$.
\[
	{\cal C}(\mathbb{M}, id, \mathbb{F})
	 = x_{\overline{\mathbb{M}}}{C}[\mathbb{M}]
	 C^{\mathbb{M}}_{\mathbb{F}}[id]
	 C^*[\mathbb{F}]x^*_{\overline{\mathbb{F}}}.
\]
Here $x_{\overline{\mathbb{M}}}$ [resp. $x^*_{\overline{\mathbb{F}}}$]
is a permutation
which maps $j$ to the number in the $j$-th coordinate
of $\overline{\mathbb{M}}$.
[resp. the number written in the $j$-th coordinate of
$\overline{\mathbb{F}}$ to $j'$].
We call this expression a {\em basic expression} of $w$.
We note that for a seat-plan $w$
there are several ways to choose $\rho_w$, a sequence of the parts of $w$.
{\em i.e.}
Several basic expressions can be defined for one seat-plan.

\subsection{The standard expressions of a seat-plan}
Our claim is that
any basic expression of a seat-plan $w$
can be moved to a special expression
called the {\em standard expression}
by using the relations $(R0)$-$(R4)$ and $(E1)$-$(E5)$ finite times.
In order to show this claim,
next we introduce the notion of a {\em crank form expression} of $w$.

Consider the propagating parts
$\pi(w)=\{T_{i_1},\ldots, T_{i_p}\}\ (p = |\pi(w)|)$ of $w$.
Let $(M_1, \ldots, M_p)$ be
a sequence of the upper parts of $\pi(w)$
and $(F_1, \ldots, F_p)$
the one of the lower parts.
Then there exists a permutation $\sigma\in{\mathfrak S_p}$
such that $\{M_{\sigma(k)}\sqcup F_{k}\ |\ k = 1, \ldots, p \} = \pi(w)$.
As is well known, a permutation $\sigma$ of degree $p$ is presented by
$p$-strings braid which connects the lower $k$-th point
to the upper $\sigma(k)$-th point.

Now we define a {\em crank form expression} of $w$.
Let $\mathbb{M} = (M_{1}, \ldots, M_{u})$
[resp. $\mathbb{F} = (F_1, \ldots, F_v)$]
be any fixed sequence of the upper [resp. lower] parts of $w$
(whose empty parts are omitted and propagating parts are specified).
From the sequences $\mathbb{M}$ and $\mathbb{F}$,
we obtain products of cranks $C[\mathbb{M}]$ and $C^*[\mathbb{F}]$.
Further,  from $\pi(\mathbb{M})$ and $\pi(\mathbb{F})$,
we obtain a permutation $\sigma\in{\mathfrak S}_p$
such that $\{M_{i_{\sigma(k)}}\sqcup F_{j_{k}}\ ;\ k = 1, \ldots, p \}
= \pi(w)$.
Then
the product
\[
	{\cal C}(\mathbb{M},\sigma,\mathbb{F})
	=
	x_{\mathbb{\overline{M}}}
	{C}[\mathbb{M}]C^{\mathbb{M}}_{\mathbb{F}}[\sigma]C^*[\mathbb{F}]
	x^*_{\overline{F}}
\]
becomes a presentation of $w$.
We call this presentation a {\em crank form expression of $w$
defined by ${\mathbb M}$ and ${\mathbb F}$}.
If a crank form expression is
made from sequences
$(M_1, \ldots, M_u)$ and $(F_1, \ldots, F_v)$ such that
\begin{enumerate}
\item
$M_1, \ldots, M_p$ and $F_1, \ldots, F_p$ are propagating,
\item
$M_{p+1}, \ldots, M_{u}$
and $F_{p+1}, \ldots, F_{v}$ are defective.
\end{enumerate}
then we call it {\em in normal form}.

Finally, we define the {\em standard expression} of $w$,
as a special expression of crank form expressions in normal form
by properly choosing the sequences $(M_1, \ldots, M_u)$
and $(F_1, \ldots, F_v)$.
For this purpose
first we sort the parts $T_1, \ldots, T_s$ of $w$ so that they satisfy:
\begin{enumerate}
\item
$\pi(w) = \{T_1, T_2, \ldots, T_p\},$
\item
$\{T_i\ |\ i = p+1, p+2, \ldots, u\}$
is the set of all upper defective parts,
\item
$\{T_i\ |\ i = u+1, u+2, \ldots, u+(v-p)\}$
is the set of all lower defective parts.
\end{enumerate}

For an ordered set $E$, let  $\min E$ be the minimum element in $E$.
Let $T_1, T_{2}, \ldots, T_{p}$ be the parts of $\pi(w)$.
Define $(M_1, M_{2}, \ldots, M_{p})$
so that they satisfy
\[
	\{M_1, M_{2}, \ldots, M_{p}\}
	= \{T_1^M, T_{2}^M, \ldots, T_{p}^M\}
\]
and
\[
	\min M_1 <\min M_{2} <\cdots <\min M_{p}.
\]
Similarly $(F_1, F_{2}, \ldots, F_{p})$
are defined using the lower parts of $\pi(w)$.
In such a method, the sequences of the upper parts $(M_1,\ldots, M_p)$ and
the lower parts $(F_1, \ldots, F_p)$ are uniquely defined
from a seat-plan $w$.

Now we define $(M_{p+1}, \ldots, M_{u})$
so that they satisfy
\[
	\{M_{p+1}, M_{p+2}, \ldots, M_{u}\}
	= \{T_{p+1}, T_{p+2}, \ldots, T_{u}\}
\]
and
\[
	\min M_{p+1} <\min M_{p+2} <\cdots <\min M_{u}.
\]
Similarly we define $(F_{p+1}, \ldots, F_{v})$
so that they satisfy
\[
	\{F_{p+1}, F_{p+2}, \ldots F_{v}\}
	= \{T_{u+1}, T_{u+2}, \ldots, T_{u+(v-p)}\}
\]
and
\[
	\min F_{p+1} <\min F_{p+2} <\cdots <\min F_{v}.
\]
Using these upper and lower sequences defined above,
we can obtain a crank from expression in normal form
called the {\em standard expression} of $w$.

\section{Proof of Theorem~1.2}

In the previous section,
we have defined the standard expression
of a word in the alphabet ${\cal L}_n^1$
as a special expression of the crank form expressions in normal form.
In this section, first we show that 
any two crank form expressions of a seat-plan $w$
are transformed to each other by finitely using the local moves
shown in Section~2.
Then we show
that any word in the alphabet ${\cal L}_n^1$
is moved to a scalar multiple of one of the crank form expressions.
Thus we can find that
any word in the alphabet ${\cal L}_n^1$
is reduced to a scalar multiple of a the standard expression.
Since the set of  seat-plans
makes a basis of $A_{n}(Q)$
and since every  seat-plan has its standard expression,
this proves that the partition algebra $A_{n}(Q)$ is characterized
by the generators and relations in Theorem~\ref{th:main}.

First we show that
any two crank form expressions
are transformed to each other.
For $w\in\Sigma_n^1$, let ${\mathbb M} = (M_1, \ldots, M_u)$
and ${\mathbb F} = (F_1, \ldots, F_v)$ be
sequences of the upper and the lower parts of $w$ respectively.
Assume that the subsequence $\pi(\mathbb{M})  = (M_{i_1}, \ldots, M_{i_p})$
($i_1<\cdots <i_p$) of $\mathbb{M}$
is the sequence of the upper propagating parts
and $\pi(\mathbb{F})  = (F_{j_1}, \ldots, F_{j_p})$
($j_1<\cdots <j_p$) is that of the lower propagating parts.
Then
there exists a permutation $\sigma$ of degree $p = |\pi(w)|$
which specifies how the propagating parts of $w$
are recovered from $\pi(\mathbb{M})$ and $\pi(\mathbb{F})$.
Let $\mathbb{E} = (E_1, \ldots, E_s)$
be a sequence of the upper or lower parts.
Suppose that $\tau\in{\mathfrak S}_s$ acts on $\mathbb{E}$
by $\tau\mathbb{E} = (E_{\tau^{-1}(1)}, \ldots, E_{\tau^{-1}(s)})$.
Then the following lemma holds.

\begin{lem}\label{lem:defect}
Let ${\mathbb M} = (M_1, \ldots, M_u)$
and ${\mathbb F} = (F_1, \ldots, F_v)$ be
sequences of the upper and the lower (non-empty) parts of
a seat-plan respectively.
If $M_i$ [resp. $F_i$] is defective
and $\sigma_i = (i,i+1)$, the $i$-th adjacent transposition,
then the crank form expression
${\cal C}(\mathbb{M}, \sigma, \mathbb{F})$ is moved to
another crank form expression
${\cal C}(\sigma_i\mathbb{M}, \sigma, \mathbb{F})$
[resp.
${\cal C}(\mathbb{M}, \sigma, \sigma_i\mathbb{F})$
].
\end{lem}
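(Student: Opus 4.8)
The plan is to reduce the whole statement to one of the local moves already recorded in Section~2. First, since the relations $(R0)$--$(R4)$, $(E1)$--$(E5)$ and hence all the local moves of Section~2 are invariant under the involution $*$, it suffices to treat the case in which $M_i$ is defective; the case of a defective $F_i$ follows by applying $*$ to the resulting chain of moves. So assume $M_i$ defective and write ${\cal C}(\mathbb{M},\sigma,\mathbb{F}) = x_{\overline{\mathbb{M}}}\,C[\mathbb{M}]\,C^{\mathbb{M}}_{\mathbb{F}}[\sigma]\,C^*[\mathbb{F}]\,x^*_{\overline{\mathbb{F}}}$. Because $M_i$ is defective, $\pi(\mathbb{M})$ and $\pi(\sigma_i\mathbb{M})$ are the same sequence of parts, so $\sigma$ is unchanged and the propagating survivors of $C[\sigma_i\mathbb{M}]$ emerge in the same left-to-right order as those of $C[\mathbb{M}]$; consequently $C^{\mathbb{M}}_{\mathbb{F}}[\sigma]$, $C^*[\mathbb{F}]$ and $x^*_{\overline{\mathbb{F}}}$ are literally unchanged, and the lemma reduces to showing that $x_{\overline{\mathbb{M}}}\,C[\mathbb{M}]$ is moved to $x_{\overline{\sigma_i\mathbb{M}}}\,C[\sigma_i\mathbb{M}]$. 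Putting $a=|M_1|+\cdots+|M_{i-1}|$, $m=|M_i|$, $m'=|M_{i+1}|$, the word $\overline{\sigma_i\mathbb{M}}$ differs from $\overline{\mathbb{M}}$ only by interchanging the consecutive blocks of columns $[a+1,a+m]$ and $[a+m+1,a+m+m']$, so $x_{\overline{\sigma_i\mathbb{M}}}=x_{\overline{\mathbb{M}}}\,\beta$ for the corresponding block transposition $\beta$, a word in $s_{a+1},\dots,s_{a+m+m'-1}$. Comparing $C[\mathbb{M}]=C_{\mathbb{M}}[1]\cdots C_{\mathbb{M}}[u-1]$ with $C[\sigma_i\mathbb{M}]$, the cranks with index $l<i$ or $l>i+1$ agree in the two products (they depend on $\mathbb{M}$ only through data the swap fixes), so the whole assertion collapses to the local identity that $C_{\mathbb{M}}[i]\,C_{\mathbb{M}}[i+1]$ --- the defective ``merge-and-cap'' crank of $M_i$ on columns $[a+1,a+m]$ followed by the crank of $M_{i+1}$ on columns $[a+m+1,a+m+m']$ --- is moved to $\beta$ followed by the same two cranks with the two blocks interchanged.

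This last identity is exactly the ``defective part shift'' move $(R16')$ of Figure~\ref{fig:dpsE} when $M_{i+1}$ is propagating, and the ``defective part exchange'' move $(R17')$ of Figure~\ref{fig:dpeE} when $M_{i+1}$ is defective: the crank of $M_i$ is assembled from $f_j$'s and a single $e_j$ whose indices lie inside the $M_i$-block, so it commutes past the crank of the neighbouring block and past the crossings making up $\beta$, using only the commuting cases of $(R2)$, $(R4)$, $(E3)$ and $(E5)$, and it re-emerges on the other side of $M_{i+1}$ in the block-swapped position recorded by $\beta$.

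The main obstacle is not conceptual but bookkeeping: one must match the explicit generator words for the cranks read off from Figures~\ref{fig:mcrank}--\ref{fig:crank} before and after the swap, follow the propagating survivor of $M_{i+1}$ (which is displaced leftward by $m$ columns --- precisely the displacement encoded by $\beta$), and verify that during the slide no index pair of the non-commuting type $(E4)$ (an ``$e_{j+1}f_j$'') is ever produced; the key point there is that the rightmost $f$ occurring in the $M_i$-crank carries index $a+m-1$, not $a+m$, so that it never meets the leftmost generator of the $M_{i+1}$-crank. Once this is checked, the local identity is immediate from the two named moves of Section~2, and the lemma follows.
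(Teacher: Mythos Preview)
Your argument follows essentially the same route as the paper's: introduce the block transposition (the paper calls it $P_{\mathbb{M},i}$, your $\beta^{-1}$), factor it off $x_{\overline{\mathbb{M}}}$, and reduce everything to the identity $P_{\mathbb{M},i}C[\mathbb{M}]=C[\sigma_i\mathbb{M}]$, to be established via the defective-part moves $(R16')$ and $(R17')$.

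The one place your justification diverges from the paper, and where it does not hold up, is the claim in your last two paragraphs that the $M_i$-crank slides past the $M_{i+1}$-crank and past $\beta$ \emph{using only the commuting cases} $(R2)$, $(R4)$, $(E3)$, $(E5)$. This is not so: $\beta$ is a word in $s_{a+1},\dots,s_{a+m+m'-1}$, so its indices overlap the index set of the $f_j$'s and $e_j$ making up the $M_i$-crank, and there is no commuting relation available at those overlaps. The moves $(R16')$ and $(R17')$ are not commutations; each slides a \emph{single} defective strand past one crossing (resp.\ past one neighbouring defective strand), and they rest on the non-commuting relations such as $(E2)$ and $(E4'')$. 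Accordingly, the paper first applies $(R14')$ (removal of excrescences) to strip the defective $M_i$-block down to a single defective strand, then \emph{iterates} $(R16')$ or $(R17')$ to carry that strand across the whole $M_{i+1}$-block, and finally applies $(R14')$ again (addition of excrescences) to restore the size of $M_i$. Your proof is missing this excrescence-removal/iteration step, which is exactly what puts the problem into the form to which $(R16')$ and $(R17')$ apply.
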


\begin{proof}
We consider the case $M_i$ is defective.
In case $F_i$ is defective, the similar proof will hold.
Let $P_{\mathbb{M},i}\in{\mathfrak S}_n$ be
a permutation defined by
\[
P_{\mathbb{M},i}(x):=
\left\{
	\begin{array}{ll}
		x + |M_{i+1}|&
\mbox{if}\ \sum_{j=1}^{i-1}|M_{j}| <x \leq \sum_{j=1}^{i}|M_{j}|,\\
		x - |M_{i}|&
\mbox{if}\ \sum_{j=1}^{i}|M_{j}| <x \leq \sum_{j=1}^{i+1}|M_{j}|,\\
		x&
		\mbox{otherwise}.
	\end{array}
\right.
\]
Then we find that
$x_{\overline{\mathbb{M}}}P^{-1}_{\mathbb{M},i}$
maps $j$ to the $j$-th coordinate of $\overline{\sigma_i\mathbb{M}}$.
Hence we have $x_{\overline{\mathbb{M}}}P^{-1}_{\mathbb{M},i}
 = x_{\overline{\sigma_i\mathbb{M}}}$.
(For the definition of $\overline{\mathbb{M}}$, see Section~3.2.)

On the other hand, since $M_i$ is defective,
we have $P_{\mathbb{M},i}C[\mathbb{M}] = C[\sigma_i\mathbb{M}]$
by removing an excrescence of $M_i$
and iteratively using ``defective part exchange'' ($R17'$)
in Figure~\ref{fig:dpeE} (if $M_{i+1}$ is defective)
or iteratively using ``defective part shift'' ($R16'$) 
in Figure~\ref{fig:dpsE} (if $M_{i+1}$ is propagating),
and then adding an excrescence to $M_i$ just moved.
Thus we obtain
\begin{eqnarray*}
{\cal C}(\mathbb{M}, \sigma, \mathbb{F})
 &=& x_{\overline{\mathbb{M}}}
	C[\mathbb{M}]C^{\mathbb{M}}_{\mathbb{F}}[\sigma]C^*[\mathbb{F}]
	x^*_{\overline{\mathbb{F}}}\\
 &=& (x_{\overline{\mathbb{M}}}P^{-1}_{\mathbb{M},i})
	(P_{\mathbb{M},i}C[\mathbb{M}])
	C^{\mathbb{M}}_{\mathbb{F}}[\sigma]C^*[\mathbb{F}]
	x^*_{\overline{\mathbb{F}}}\\
 &=& x_{\overline{\sigma_i\mathbb{M}}}
	C[\sigma_i\mathbb{M}]
	C^{\mathbb{M}}_{\mathbb{F}}[\sigma]C^*[\mathbb{F}]
	x^*_{\overline{\mathbb{F}}}\\
 &=&{\cal C}(\sigma_i\mathbb{M}, \sigma, \mathbb{F}).
\end{eqnarray*}
\end{proof}

\begin{rem}\label{rem:defect2}
Lemma~\ref{lem:defect} also holds
if $M_{i+1}$ [resp. $F_{i+1}$] is defective.
\end{rem}
By Lemma~\ref{lem:defect} and Remark~\ref{rem:defect2}
we may assume that any crank form expression
is given in normal form.

\begin{lem}\label{lem:crex}
Let ${\cal C}(\mathbb{M}, \sigma, \mathbb{F})$
be a crank form expression of $w$ in normal form.
If $M_i$ and $M_{i+1}$ are propagating
then ${\cal C}(\mathbb{M},\sigma,\mathbb{F})$
is moved to another crank form expression
${\cal C}(\sigma_i\mathbb{M}, \sigma_i\sigma,\mathbb{F})$ in normal form.
Similarly if $F_i$ and $F_{i+1}$ are propagating,
then  ${\cal C}(\mathbb{M},\sigma,\mathbb{F})$
is moved to  ${\cal C}(\mathbb{M},\sigma\sigma_i,\sigma_i\mathbb{F})$.
\end{lem}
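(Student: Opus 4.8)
The plan is to prove Lemma~\ref{lem:crex} by a direct diagrammatic computation analogous to the proof of Lemma~\ref{lem:defect}, exploiting the $*$-involution symmetry to reduce to a single case. I would treat the case where $M_i$ and $M_{i+1}$ are both propagating; the statement for $F_i,F_{i+1}$ then follows by applying $*$. Recall that in normal form the propagating parts $M_1,\dots,M_p$ sit at the left, and the crank product $x_{\overline{\mathbb M}}C[\mathbb M]$ assembles the upper parts in the order prescribed by $\mathbb M$, while the permutation $\sigma\in{\mathfrak S}_p$ in $C^{\mathbb M}_{\mathbb F}[\sigma]$ records which upper propagating block is connected to which lower propagating block. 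Swapping $M_i\leftrightarrow M_{i+1}$ (both propagating, so $i,i+1\le p$) changes two things: the arrangement of the blocks feeding into $C[\mathbb M]$, and correspondingly the matching permutation, which picks up a left multiplication by $\sigma_i$.

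The key steps, in order: (1) Introduce, exactly as in Lemma~\ref{lem:defect}, the block-transposition permutation $P_{\mathbb M,i}\in{\mathfrak S}_n$ that interchanges the coordinate blocks of sizes $|M_i|$ and $|M_{i+1}|$, and observe $x_{\overline{\mathbb M}}P^{-1}_{\mathbb M,i}=x_{\overline{\sigma_i\mathbb M}}$ by the same bookkeeping argument. (2) Show $P_{\mathbb M,i}\,C[\mathbb M]=C[\sigma_i\mathbb M]\,(\text{something on the }\sigma\text{ side})$: pushing the block transposition through the tower of cranks $C_{\mathbb M}[1]\cdots C_{\mathbb M}[u-1]$ reorders the two adjacent propagating blocks, and since both blocks are propagating there is no excrescence to remove or re-add — instead the crossing of the two propagating strands at the interface with $C^{\mathbb M}_{\mathbb F}[\sigma]$ is absorbed as a left factor $\sigma_i$ into that middle crank, using the braid/commutation local moves from Section~2 together with relation ($R0$)-type moves $f_{i+1}s_is_{i+1}=s_is_{i+1}f_i$. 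That is, $P_{\mathbb M,i}C[\mathbb M]C^{\mathbb M}_{\mathbb F}[\sigma]=C[\sigma_i\mathbb M]C^{\mathbb M}_{\mathbb F}[\sigma_i\sigma]$. (3) Chain the three identities:
\[
{\cal C}(\mathbb M,\sigma,\mathbb F)
=(x_{\overline{\mathbb M}}P^{-1}_{\mathbb M,i})(P_{\mathbb M,i}C[\mathbb M])C^{\mathbb M}_{\mathbb F}[\sigma]C^*[\mathbb F]x^*_{\overline{\mathbb F}}
=x_{\overline{\sigma_i\mathbb M}}C[\sigma_i\mathbb M]C^{\mathbb M}_{\mathbb F}[\sigma_i\sigma]C^*[\mathbb F]x^*_{\overline{\mathbb F}},
\]
and note the right-hand side is exactly ${\cal C}(\sigma_i\mathbb M,\sigma_i\sigma,\mathbb F)$, which is still in normal form since $\sigma_i$ only permutes the first $p$ entries (both propagating). (4) Apply $*$ to get the $\mathbb F$-version: $*$ sends ${\cal C}(\mathbb M,\sigma,\mathbb F)\mapsto{\cal C}(\mathbb F,\sigma^{-1},\mathbb M)$-type expressions and turns left multiplication by $\sigma_i$ on $\sigma$ into right multiplication, giving ${\cal C}(\mathbb M,\sigma\sigma_i,\sigma_i\mathbb F)$.

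The main obstacle I expect is step (2): carefully verifying that sliding the block transposition $P_{\mathbb M,i}$ through the crank tower produces precisely one extra adjacent transposition $\sigma_i$ on the $\mathfrak S_p$ factor and nothing else — no stray scalar $Q$, no leftover $f$'s or $e$'s. This requires tracking how a propagating block, as opposed to a defective one, interacts with the $C_{\mathbb M}[l]$ cranks: a defective block gets an excrescence stripped and reattached (as in Lemma~\ref{lem:defect}), whereas a propagating block's strand must be carried all the way down to the central crank and the two crossing strands reinterpreted as the generator $s_i$ acting on the matching permutation. Making this rigorous amounts to an induction on the number of cranks $C_{\mathbb M}[l]$ the transposition must pass, at each stage invoking the appropriate local move from Figures~\ref{fig:fss}--\ref{fig:efe} (principally $f_{i+1}s_is_{i+1}=s_is_{i+1}f_i$ and the braid relations in ($R1$)); I would present this as a short lemma-internal induction rather than spelling out every diagram.
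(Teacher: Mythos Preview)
Your proposal is correct and follows essentially the same route as the paper's proof: introduce $P_{\mathbb{M},i}$, use $x_{\overline{\mathbb{M}}}P^{-1}_{\mathbb{M},i}=x_{\overline{\sigma_i\mathbb{M}}}$, establish $P_{\mathbb{M},i}C[\mathbb{M}]C^{\mathbb{M}}_{\mathbb{F}}[\sigma]=C[\sigma_i\mathbb{M}]C^{\mathbb{M}}_{\mathbb{F}}[\sigma_i\sigma]$, and chain. The only difference is in step~(2): you anticipate an induction pushing $P_{\mathbb{M},i}$ through the entire crank tower, whereas the paper localizes the computation to just the two affected cranks, proving the single identity $P_{\mathbb{M},i}\,C_{\mathbb{M}}[i]\,C_{\mathbb{M}}[i+1]=C_{\sigma_i\mathbb{M}}[i]\,C_{\sigma_i\mathbb{M}}[i+1]\,\sigma_i$ via one diagrammatic move (the other cranks and the resulting $\sigma_i$ commute trivially with what remains), so no induction is needed.
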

\begin{proof}
Let $C_{\mathbb{M}}[i]$ and $C_{\mathbb{M}}[i+1]$
be $i$-th and $(i+1)$-st cranks of $C[\mathbb{M}]$.
By Figure~\ref{fig:crex}, we have
\[
P_{\mathbb{M},i}C_{\mathbb{M}}[i]C_{\mathbb{M}}[i+1]
=C_{\sigma_i\mathbb{M}}[i]C_{\sigma_i\mathbb{M}}[i+1]\sigma_i.
\]
\fig{fig:crex}{Crank form exchange}{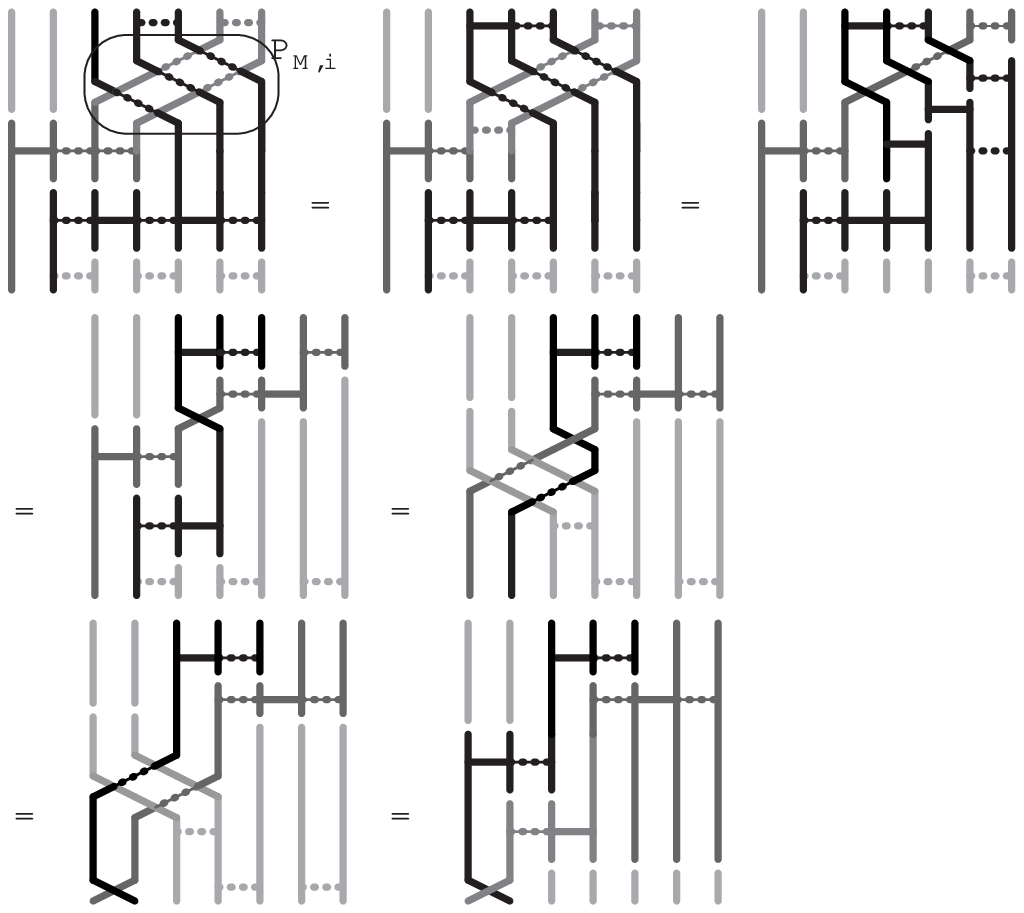}
Thus we obtain
\begin{eqnarray*}
{\cal C}(\mathbb{M}, \sigma, \mathbb{F})
&=&
x_{\overline{\mathbb{M}}}
	C[\mathbb{M}]
	C^{\mathbb{M}}_{\mathbb{F}}[\sigma]C^*[\mathbb{F}]
	x^*_{\overline{\mathbb{F}}}\\
&=&
(x_{\overline{\mathbb{M}}}P^{-1}_{\mathbb{M},i})
(P_{\mathbb{M},i}C[\mathbb{M}])
	C^{\mathbb{M}}_{\mathbb{F}}[\sigma]C^*[\mathbb{F}]
	y^*_{\overline{\mathbb{F}}}\\
&=&
x_{\overline{\sigma_i\mathbb{M}}}
C[\sigma_i\mathbb{M}]C^{\mathbb{M}}_{\mathbb{F}}[\sigma_i\sigma]
	C^*[\mathbb{F}]
	x^*_{\overline{\mathbb{F}}}\\
&=& {\cal C}(\sigma_i\mathbb{M}, \sigma_i\sigma, \mathbb{F}).
\end{eqnarray*}
\end{proof}

By Lemma~\ref{lem:defect}, Remark~\ref{rem:defect2} and Lemma~\ref{lem:crex}
we obtain the following.

\begin{prop}\label{prop:normalize}
A crank form expression of a seat-plan
is moved to its standard expression.
\end{prop}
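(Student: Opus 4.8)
\textbf{Proof proposal for Proposition~\ref{prop:normalize}.}
The plan is to take an arbitrary crank form expression $\mathcal{C}(\mathbb{M},\sigma,\mathbb{F})$ of a seat-plan $w$ and show that it can be transformed, by finitely many of the local moves of Section~2, into the particular crank form expression that was singled out as the standard expression of $w$. The strategy is a two-stage normalization: first put the expression into normal form (all propagating parts listed before all defective parts, in both $\mathbb{M}$ and $\mathbb{F}$), then within normal form permute the propagating parts and the defective parts into the prescribed order (increasing minima), absorbing the induced permutations of the propagating indices into the middle crank $\sigma$.

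First I would use Lemma~\ref{lem:defect} together with Remark~\ref{rem:defect2}: whenever a defective part $M_i$ (or $F_i$) is immediately adjacent in the sequence to another part, an adjacent transposition $\sigma_i$ applied to $\mathbb{M}$ (or $\mathbb{F}$) takes one crank form expression to another, \emph{without} changing the middle permutation $\sigma$. Since any sequence can be sorted by adjacent transpositions, repeatedly moving each defective part leftward past propagating parts that precede it lets me bubble all defective parts to the tail of $\mathbb{M}$ and of $\mathbb{F}$; after finitely many such moves the expression is in normal form. The point is that moving a defective part costs nothing on the $\sigma$ side, so normal form is reached by a purely combinatorial bubble-sort on each of the two sequences.

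Next, working inside normal form, I would apply Lemma~\ref{lem:crex}: an adjacent transposition $\sigma_i$ of two \emph{propagating} entries $M_i,M_{i+1}$ (resp. $F_i,F_{i+1}$) again produces a crank form expression in normal form, this time at the cost of replacing $\sigma$ by $\sigma_i\sigma$ (resp. $\sigma\sigma_i$) — which is harmless because the middle crank is defined precisely to carry whatever permutation is needed to reconstruct $\pi(w)$, so the product $\mathcal{C}(\sigma_i\mathbb{M},\sigma_i\sigma,\mathbb{F})$ still presents the same seat-plan $w$. Using these moves I can sort the propagating block $M_1,\dots,M_p$ of $\mathbb{M}$ into the order $\min M_1<\cdots<\min M_p$, and independently sort the propagating block of $\mathbb{F}$; and combining with Lemma~\ref{lem:defect} I can sort the two defective blocks into their prescribed increasing-minimum orders as well. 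Since the standard expression is, by definition, the unique crank form expression in normal form whose four blocks are sorted in exactly these ways, the result of this finite sequence of moves is the standard expression of $w$.

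The main obstacle is bookkeeping rather than a genuinely hard step: I must check that the two sorting stages do not interfere — i.e. that sorting the defective tail does not disturb the already-sorted propagating head, and vice versa — and that the cumulative adjacent transpositions applied to $\mathbb{M}$ and $\mathbb{F}$ really do realize the target orderings while the induced changes to $\sigma$ are absorbed consistently by the middle crank. Both points follow because Lemmas~\ref{lem:defect} and~\ref{lem:crex} are stated for a \emph{single} adjacent transposition with the rest of the data arbitrary, so they may be iterated freely; one only needs to observe that adjacent transpositions acting within the propagating block never move a propagating entry past the defective/propagating boundary, and likewise for the defective block, so normal form is preserved throughout. I would present the argument as: "apply Lemma~\ref{lem:defect} and Remark~\ref{rem:defect2} to reach normal form; then apply Lemma~\ref{lem:crex} (for the propagating blocks) and Lemma~\ref{lem:defect} again (for the defective blocks) to reach the sorted orderings defining the standard expression," with the combinatorial fact that adjacent transpositions generate the symmetric group doing the rest.
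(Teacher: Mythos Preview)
Your proposal is correct and follows essentially the same approach as the paper, which simply records that Proposition~\ref{prop:normalize} is an immediate consequence of Lemma~\ref{lem:defect}, Remark~\ref{rem:defect2}, and Lemma~\ref{lem:crex}; you have spelled out the bubble-sort bookkeeping that the paper leaves implicit. One small wording slip: to reach normal form you want to move defective parts \emph{rightward} (toward the tail) past propagating parts that follow them, not leftward past ones that precede them --- but your intent is clear and the argument stands.
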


Now we prove that any word in the alphabet ${\cal L}_n^1$
is moved to a crank form expression.
By the above proposition, we will find that
any word can be moved to its standard expression.

\begin{prop}
If ${\cal C}(\mathbb{M},\sigma,\mathbb{F})$ is the standard
expression of a seat-plan $w$,
then $s_i{\cal C}(\mathbb{M},\sigma,\mathbb{F})$ is 
moved to a crank form expression of $s_iw$.
\end{prop}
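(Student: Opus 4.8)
The plan is to push the generator $s_i$ through the leading permutation of the standard expression and then re‑sort. Recall from Section~3 that
\[
\mathcal{C}(\mathbb{M},\sigma,\mathbb{F})
 = x_{\overline{\mathbb{M}}}\,C[\mathbb{M}]\,C^{\mathbb{M}}_{\mathbb{F}}[\sigma]\,C^*[\mathbb{F}]\,x^*_{\overline{\mathbb{F}}},
\]
so left multiplication affects only the first factor: $s_i\,\mathcal{C}(\mathbb{M},\sigma,\mathbb{F}) = (s_i x_{\overline{\mathbb{M}}})\,C[\mathbb{M}]\,C^{\mathbb{M}}_{\mathbb{F}}[\sigma]\,C^*[\mathbb{F}]\,x^*_{\overline{\mathbb{F}}}$. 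Now $s_i x_{\overline{\mathbb{M}}}$ is again a permutation diagram, and one checks directly from the definition of $x_{\overline{\mathbb{M}}}$ that it corresponds to the sequence $\overline{\mathbb{M}}$ with the entries $i$ and $i+1$ interchanged. Let $\mathbb{M}'$ be the sequence of upper parts of $s_iw$ obtained from $\mathbb{M}$ by applying the transposition $(i,i+1)$ to the elements of each part; then $\mathbb{M}'$ has the same composition and the same sequence of propagating/defective labels as $\mathbb{M}$ (and $\mathbb{M}'=\mathbb{M}$ when $i,i+1$ lie in one part of $w$). Interchanging $i$ and $i+1$ in $\overline{\mathbb{M}}$ moves each of these entries only within the block of coordinates belonging to its own part, so
\[
s_i x_{\overline{\mathbb{M}}} = x_{\overline{\mathbb{M}'}}\,\pi,
\]
where $\pi$ is a product of at most two permutations, each acting inside a single block of $\overline{\mathbb{M}}$ (a single adjacent transposition of consecutive coordinates if $i,i+1$ are in the same part).

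Next I would absorb $\pi$ into $C[\mathbb{M}]$. A permutation that rearranges only the coordinates of one block, placed directly above $C[\mathbb{M}]$, is absorbed by $C[\mathbb{M}]$: the diagram $C[\mathbb{M}]$ merges all coordinates of that block into a single propagating strand (capping it off if the part is defective), hence forgets their internal order. On the level of moves this is a finite composition of the merging relations $f_is_i=f_i$ $(R3)$ and $f_is_{i+1}f_i=f_if_{i+1}$ $(R2'')$ (Figure~\ref{fig:fsf}), supplemented for a defective part by $(E4)$ and the excrescence and defective‑part moves of Figures~\ref{fig:dpjrE}--\ref{fig:dpeE}, together with the far‑commutation moves $s_is_j=s_js_i$, $f_is_j=s_jf_i$ of Section~2. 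Thus $s_i\,\mathcal{C}(\mathbb{M},\sigma,\mathbb{F})$ is moved to $x_{\overline{\mathbb{M}'}}\,C[\mathbb{M}]\,C^{\mathbb{M}}_{\mathbb{F}}[\sigma]\,C^*[\mathbb{F}]\,x^*_{\overline{\mathbb{F}}}$.

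It then remains to recognise the right‑hand side as a crank form expression of $s_iw$. Since $\mathbb{M}'$ has the same composition with labels as $\mathbb{M}$ we have $C[\mathbb{M}']=C[\mathbb{M}]$; since the propagating positions of $\mathbb{M}'$ coincide with those of $\mathbb{M}$ we have $C^{\mathbb{M}'}_{\mathbb{F}}[\sigma]=C^{\mathbb{M}}_{\mathbb{F}}[\sigma]$; and $C^*[\mathbb{F}]$ and $x^*_{\overline{\mathbb{F}}}$ are unchanged because $s_iw$ has the same lower parts as $w$. Finally $\sigma$ still records how the propagating parts of $s_iw$ are recovered from $\pi(\mathbb{M}')$ and $\pi(\mathbb{F})$, because $(i,i+1)$ sends the propagating part $M_{i_{\sigma(k)}}\sqcup F_{j_k}$ of $w$ to the propagating part $M'_{i_{\sigma(k)}}\sqcup F_{j_k}$ of $s_iw$. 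Hence $s_i\,\mathcal{C}(\mathbb{M},\sigma,\mathbb{F})$ is moved to $\mathcal{C}(\mathbb{M}',\sigma,\mathbb{F})$, a crank form expression of $s_iw$, which may then be put in standard form by Proposition~\ref{prop:normalize}. I expect the only genuine work to be the absorption step of the second paragraph — the verification that a permutation internal to one block is annihilated by the corresponding crank — the remainder being bookkeeping; note also that no power of $Q$ appears, since stacking the permutation diagram $s_i$ on top of $w$ produces no closed shaded region.
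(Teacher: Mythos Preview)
Your argument is correct and follows essentially the same route as the paper: push $s_i$ past $x_{\overline{\mathbb{M}}}$ and absorb what remains into $C[\mathbb{M}]$. The paper simply makes the two cases explicit rather than wrapping them in a single correction term $\pi$: when $i,i+1$ lie in different parts, one has $s_i x_{\overline{\mathbb{M}}}=x_{\overline{\mathbb{M}'}}$ on the nose (since $i+1\notin M_j$ and $i\notin M_k$ force the replaced entries to occupy the same positions in the sorted blocks), so your $\pi$ is the identity there; when $i,i+1$ lie in one part $M_k$, $\pi$ is the single adjacent transposition $s_{x_{\overline{\mathbb{M}}}^{-1}(i)}$ and is killed by $C_{\mathbb{M}}[k]$ via $(R3)$. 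Thus your ``at most two'' is really ``at most one,'' and the heavier absorption machinery you invoke (Figures~\ref{fig:dpjrE}--\ref{fig:dpeE}) is never needed here.
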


\begin{proof}
If $i$ and $i+1$ are both included one of the (upper) parts of $w$,
say $M_k$, then we have
\[
\sum_{j=1}^{k-1}|M_j|
<x^{-1}_{\overline{\mathbb{M}}}(i) < x^{-1}_{\overline{\mathbb{M}}}(i+1)
= x^{-1}_{\overline{\mathbb{M}}}(i)+1
\leq\sum_{j=1}^{k}|M_j|
\]
and
\[
(x^{-1}_{\overline{\mathbb{M}}}(i), x^{-1}_{\overline{\mathbb{M}}}(i+1))
C_{\mathbb{M}}[k]
= (x^{-1}_{\overline{\mathbb{M}}}(i), x^{-1}_{\overline{\mathbb{M}}}(i)+1)
C_{\mathbb{M}}[k]
= {\cal C}_{\mathbb{M}}[k].
\]
Since
\[
s_ix_{\overline{\mathbb{M}}} = (i,i+1)x_{\overline{\mathbb{M}}}
= x_{\overline{\mathbb{M}}}
(x^{-1}_{\overline{\mathbb{M}}}(i), x^{-1}_{\overline{\mathbb{M}}}(i+1)),
\]
we find that $s_i{\cal C}(\mathbb{M}, \sigma, \mathbb{F})
= {\cal C}(\mathbb{M}, \sigma, \mathbb{F})$ is a crank form expression.

If $i$ is included in $M_j$ and $i+1$ is included in $M_k$ ($j\neq k$),
then we have
$s_ix_{\overline{\mathbb{M}}} = x_{\overline{\mathbb{M}'}}$.
Here $\mathbb{M}'$ is the sequence of the upper parts
obtained from $\mathbb{M}=(M_1,\ldots, M_u)$
by replacing $M_j$ with $M_j' = (M_j\setminus\{i\})\cup\{i+1\}$
and $M_k$ with $M_k' = (M_k\setminus\{i+1\})\cup\{i\}$.

Hence we find that $s_i{\cal C}(\mathbb{M},\sigma,\mathbb{F})$
is moved to ${\cal C}(\mathbb{M}',\sigma,\mathbb{F})$,
a crank form expression.
In particular this expression again becomes
the standard expression, unless $k = j+1$,
$t(M_j) = t(M_{j+1})$, and $i = \min M_j$, $i+1 = \min M_{j+1}$.
\end{proof}

\begin{prop}
If ${\cal C}(\mathbb{M},\sigma,\mathbb{F})$ is the standard
expression of a seat-plan $w$,
then $f{\cal C}(\mathbb{M},\sigma,\mathbb{F})$ is 
moved to a crank form expression of $fw$.
\end{prop}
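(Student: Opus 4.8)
The plan is to follow the template of the preceding proposition, left‑multiplying the standard expression
$
{\cal C}(\mathbb{M},\sigma,\mathbb{F})
= x_{\overline{\mathbb{M}}}\,C[\mathbb{M}]\,C^{\mathbb{M}}_{\mathbb{F}}[\sigma]\,C^*[\mathbb{F}]\,x^*_{\overline{\mathbb{F}}}
$
by $f=f_1$ and pushing the factor $f_1$ to the right through the leading permutation $x_{\overline{\mathbb{M}}}$ and into the product of cranks $C[\mathbb{M}]$, exactly as was done for $s_i$ a moment ago. Since $f_1$ involves only the top marked points $1$ and $2$, and since $\overline{\mathbb{M}}$ lists the sorted parts $M_1,M_2,\dots$ consecutively while $1,2$ are the two least elements of $M$, the slot $x^{-1}_{\overline{\mathbb{M}}}(1)$ is always the first slot of the block of the part $M_a$ containing top‑point $1$, and $x^{-1}_{\overline{\mathbb{M}}}(2)$ is either the second slot of that same block or else the first slot of the block of the part $M_b$ containing top‑point $2$. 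The whole argument splits along this dichotomy, and the relevant invariant of $fw$ is: top‑points $1,2$ lie in the same part of $w$, or they lie in two distinct parts $T_a,T_b$ which are then fused.

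If top‑points $1,2$ lie in the same part of $w$, then $fw=w$. Here $x^{-1}_{\overline{\mathbb{M}}}(1)$ and $x^{-1}_{\overline{\mathbb{M}}}(2)$ are consecutive, so $f_1x_{\overline{\mathbb{M}}}=x_{\overline{\mathbb{M}}}f_t$ with $t=1+\sum_{j<a}|M_j|$ (using $(R0)$ and $(R3)$–$(R4)$ to slide $f_1$ past the $s$‑word $x_{\overline{\mathbb{M}}}$). Commuting $f_t$ past the cranks of the blocks $M_1,\dots,M_{a-2}$ by disjointness of supports ($(R4)$) and then absorbing it into the crank that already fuses the whole block $M_a$ to a single strand (using $f_i^2=f_i$ together with $(R0)$ and $(R2'')$), one recovers ${\cal C}(\mathbb{M},\sigma,\mathbb{F})$ itself, which is a crank form expression of $fw=w$.

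If top‑points $1,2$ lie in distinct parts $T_a,T_b$, then $fw$ is obtained from $w$ by fusing $T_a$ and $T_b$ into one part with upper part $M_a\cup M_b$ and lower part $F_a\cup F_b$. Now $f_1x_{\overline{\mathbb{M}}}$ becomes the element joining the first slot of block $M_a$ to the first slot of block $M_b$. Conjugating by an auxiliary permutation that brings these two slots, and their blocks, adjacent — playing the role of the permutations $P_{\mathbb{M},i}$ of Lemmas~\ref{lem:defect} and~\ref{lem:crex} — and then performing the block‑fusing move inside $C[\mathbb{M}]$, rewrites the head of the word as $x_{\overline{\mathbb{M}'}}\,C[\mathbb{M}']$ where $\mathbb{M}'$ is the re‑sorted sequence of upper parts of $fw$. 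The trailing factors are handled by the mirror‑image (horizontal‑reflection, $*$‑) moves: when $M_a,M_b$ are both propagating (which forces $a=1$, $b=2$ in a standard expression) the lower parts $F_{\sigma^{-1}(1)}$ and $F_{\sigma^{-1}(2)}$ must likewise be fused, the propagating number drops by one, and $\sigma$ is replaced by the induced permutation $\sigma'$ of the smaller degree; when exactly one of $M_a,M_b$ is defective the propagating number is unchanged and only $C^{\mathbb{M}}_{\mathbb{F}}[\sigma]$ must be realigned, using ``removal/addition of excrescences'' $(R14')$ to detach and reattach the defective prong while it is shifted past the intervening blocks by ``defective part shift'' $(R16')$ and ``defective part exchange'' $(R17')$; when both are defective their blocks are already adjacent ($a=p+1$, $b=p+2$) and a single application of $(R17')$ suffices. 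In every sub‑case the outcome is a crank form expression ${\cal C}(\mathbb{M}',\sigma',\mathbb{F}')$ of $fw$, which by Proposition~\ref{prop:normalize} further moves to the standard expression of $fw$.

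The main obstacle is the combinatorial bookkeeping in the second situation: for each configuration of $M_a,M_b$ (both propagating; one propagating and one upper‑defective; both upper‑defective) one must pin down the auxiliary permutation that conjugates $f_1x_{\overline{\mathbb{M}}}$ into position, verify that after fusion the leading permutation equals $x_{\overline{\mathbb{M}'}}$ for the correctly re‑sorted $\mathbb{M}'$, and check that $\sigma'$ genuinely records how $\pi(fw)$ is recovered from $\pi(\mathbb{M}')$ and $\pi(\mathbb{F}')$. The underlying local moves are all already displayed in Section~2, so once the diagrams are drawn the verification is routine; it is only the matching of the index data that needs care.
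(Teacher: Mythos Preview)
Your proposal is correct and follows essentially the same plan as the paper: split on whether $1$ and $2$ lie in the same upper part, absorb $f$ into the crank in the first case, and in the second case bring the two relevant blocks side by side, fuse them, and clean up. The one methodological difference worth noting is that the paper, rather than conjugating by an auxiliary permutation and tracking the index data through all the sub-cases as you do, first invokes Proposition~\ref{prop:normalize} to rewrite the standard expression as a crank form ${\cal C}(\mathbb{M}', id, \mathbb{F}')$ in which $M_j$ and $M_k$ occupy the first two slots and $\sigma$ has already been absorbed into $\mathbb{F}'$; the merge then happens at the very left end via $(R2'')$, $(R2)$ and a move the paper calls $(R12'')$, and the result is restored to crank form by an explicit ``bumping'' step (Figure~17). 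This preprocessing trick buys a cleaner merge and spares the case-by-case bookkeeping you outline for the mixed propagating/defective situations; conversely, your direct approach makes the effect on $\sigma$, $\mathbb{M}$ and $\mathbb{F}$ more transparent. Either route works.
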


\begin{proof}
First consider the case $\{1, 2\}\subset M_k$ for some $k$.
In this case, there exists an integer $i$
such that $i = x^{-1}_{\overline{\mathbb{M}}}(1)$
and $i+1 = x^{-1}_{\overline{\mathbb{M}}}(2)$. 
Hence in this case we have
$fx_{\overline{\mathbb{M}}} = x_{\overline{\mathbb{M}}}f_i$
and
$f_i{\cal C}_{\mathbb{M}}[k] = {\cal C}_{\mathbb{M}}[k]$.
Thus we obtain $f{\cal C}(\mathbb{M}, \sigma, \mathbb{F})
= {\cal C}(\mathbb{M}, \sigma, \mathbb{F})$.

Next consider the case $1\in M_j$ and $2\in M_k$ ($j\neq k$).
In the following we assume that $M_j$ and $M_k$ are both propagating.
Even if either $M_j$ or $M_k$ or both of them are defective,
the similar proof will hold.
Proposition~\ref{prop:normalize} implies that
the standard expression ${\cal C}(\mathbb{M}, \sigma, \mathbb{F})$
is moved to a crank form expression
${\cal C}(\mathbb{M}', id, \mathbb{F}')$
so that the first
and the second components of $\mathbb{M}'$
are $M_j$ and $M_k$ respectively
and the first and the second components of $\mathbb{F}'$
are jointed to $M_j$ and $M_k$ respectively.
Using the relations ($R2''$), ($R2$)
and ($R12''$),
we find that the first and the second components of
$\mathbb{M}'$ and those of $\mathbb{F}'$
are merged by the action of $f$.
For example, if $|M_j| = 5$ and $|M_k| = 4$
then we have Figure~\ref{fig:fwE}.
\fig{fig:fwE}{Action of $f$ on $w$}{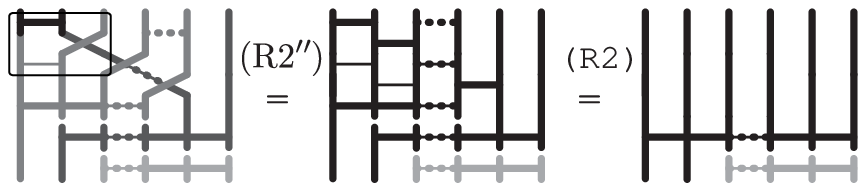}
The merged propagating parts will be moved to a
crank form expression ${\cal C}(\mathbb{M}'', id, \mathbb{F}'')$
by ``bumping'' as in Figure~\ref{fig:bump}.
Here $\mathbb{M}''$ [resp. $\mathbb{F}''$]
is a sequence of upper [resp. lower] parts
obtained from $\mathbb{M}$ [resp. $\mathbb{F}$]
by merging the first two components.
\fig{fig:bump}{Bumping}{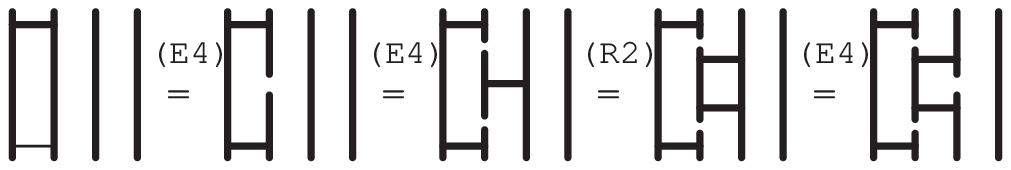}
\end{proof}

\begin{prop}
If ${\cal C}(\mathbb{M},\sigma,\mathbb{F})$ is the standard
expression of a seat-plan $w$,
then $e{\cal C}(\mathbb{M},\sigma,\mathbb{F})$ is 
moved to a crank form expression of $ew$.
\end{prop}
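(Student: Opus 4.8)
The plan is to follow the template already used for the actions of $s_i$ and $f$. First I would commute $e=e_1$ to the right past the leftmost permutation $x_{\overline{\mathbb{M}}}$, so that
\[
e\,{\cal C}(\mathbb{M},\sigma,\mathbb{F})
= x_{\overline{\mathbb{M}}}\,e_k\,{C}[\mathbb{M}]\,C^{\mathbb{M}}_{\mathbb{F}}[\sigma]\,C^*[\mathbb{F}]\,x^*_{\overline{\mathbb{F}}},
\qquad k = x^{-1}_{\overline{\mathbb{M}}}(1),
\]
the coordinate of the symbol $1$ in $\overline{\mathbb{M}}$. Since in the standard expression $M_1,\dots,M_p$ precede the defective upper parts and each $M_i$ is listed increasingly, $e_k$ meets either the first propagating crank $C_{\mathbb{M}}[1]$ (when $1$ lies in a propagating part, forcing that part to be $M_1$) or the crank $C_{\mathbb{M}}[p+1]$ of the smallest defective upper part. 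Writing $T$ for the part of $w$ whose upper part contains $1$, one checks directly that $ew$ is obtained from $w$ by creating a new upper defective singleton $\{1\}$ and deleting $1$ from $T^M$; this splits the proof into three cases according to $T$.

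If $T=\{1\}$ already, then $e_k$ meets $C_{\mathbb{M}}[p+1]$, which is essentially an occurrence of $e_k$, so $(E1)$ gives $e_k\,C_{\mathbb{M}}[p+1]=Q\,C_{\mathbb{M}}[p+1]$ and hence $e\,{\cal C}(\mathbb{M},\sigma,\mathbb{F})=Q\,{\cal C}(\mathbb{M},\sigma,\mathbb{F})$, which is $Q$ times a crank form expression of $w$ (note $ew=Qw$). If $|T^M|\ge 2$, then $e_k$ caps off the leftmost of the strands gathered by the crank of $M:=T^M$; by removing and re-adding an excrescence $(R14')$ and then iterating defective part shift $(R16')$ and defective part exchange $(R17')$, exactly as in Lemma~\ref{lem:defect}, this capped strand is detached as a defective singleton and the remaining strands now form the crank of $M\setminus\{1\}$ (still propagating, or still defective, whichever $M$ was). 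Reading the resulting word off as ${\cal C}(\mathbb{M}',\sigma,\mathbb{F})$, where $\mathbb{M}'$ is $\mathbb{M}$ with $M$ replaced by $M\setminus\{1\}$ and the new singleton $\{1\}$ inserted, yields a crank form expression of $ew$. The remaining case is $T^M=\{1\}$ with $T$ propagating, where the propagating number drops by one: $T^M$ is annihilated and $T^F$ survives as a new lower defective part. Here $e_k$ kills the trivial upper crank of $M_1=\{1\}$ and, after being pushed through the middle crank $C^{\mathbb{M}}_{\mathbb{F}}[\sigma]$ — commuting it past the braid $\sigma$ by $(E3)$ and $(E4'')$ — it caps the $\mathbb{F}$-crank attached to $T$; by $(E4)$ and the defective part moves this converts that crank into the crank of the lower defective part $T^F$, with $\sigma$ replaced by its restriction to the surviving $p-1$ propagating strands. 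Again the result is read off as a crank form expression of $ew$, and Proposition~\ref{prop:normalize} may be applied afterwards to bring it to standard form.

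The hard part will be this last case: one must keep careful track of how the middle crank $C^{\mathbb{M}}_{\mathbb{F}}[\sigma]$ and the lower cranks $C^*[\mathbb{F}]$ reorganize when a propagating strand is removed, and verify that the $e$ produced at the bottom of the middle crank can indeed be transported — using only $(R13')$--$(R17')$, $(E1)$--$(E5)$ and relations already derived in Section~2 — to the $\mathbb{F}$-crank it is meant to cap, and in the correct sorted slot among the lower parts. The computation on the $\mathbb{F}$-side is the mirror image, under the involution $*$ and the index reflection noted in Section~2, of the $\mathbb{M}$-side manipulations of Lemmas~\ref{lem:defect} and~\ref{lem:crex}, so it is lengthy but routine once the reorganization is set up; the first two cases are immediate by comparison.
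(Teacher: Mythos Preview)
Your case division and the diagrammatic manipulations you describe are correct, and they match the paper's three cases ($|M_1|>1$; $M_1=\{1\}$ defective; $M_1=\{1\}$ propagating). The route, however, differs from the paper's in one organisational choice that is worth noting.

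The paper does \emph{not} conjugate $e$ past $x_{\overline{\mathbb M}}$ to obtain $e_k$ and then work at an interior position. Instead it first invokes the argument of the previous proposition (that is, Proposition~\ref{prop:normalize} run in reverse via Lemmas~\ref{lem:defect} and~\ref{lem:crex}) to move the standard expression to a crank form expression ${\cal C}(\mathbb{M}'',id,\mathbb{F}'')$ in which the part containing $1$ is already the \emph{first} component $M_1''$ and the middle permutation has been absorbed to $id$. After this preprocessing $e=e_1$ acts at the leftmost strand and the three cases are almost immediate: for $|M_1''|>1$ the result is a crank form expression as it stands; for $M_1''=\{1\}$ defective one gets the factor $Q$ by $(E1)$; and for $M_1''=\{1\}$ propagating a single application of addition of excrescences $(R14')$ together with the ``bumping'' move of Figure~17 suffices.

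Your direct approach is legitimate, but it carries two extra burdens that the paper's preprocessing avoids. First, when $1$ lies in a defective part you have $k=1+\sum_{i\le p}|M_i|>1$, and you must still argue that $e_k$ commutes past the propagating cranks $C_{\mathbb M}[1],\ldots,C_{\mathbb M}[p]$ before it can ``meet'' the defective crank---a step you assert but do not justify. Second, in the propagating-singleton case you must transport $e$ through the middle braid $\sigma$ and reorganise the $\mathbb F$-side by hand, whereas in the paper $\sigma$ has already been set to $id$ and the lower reorganisation is subsumed in the single bumping move. Both burdens are dischargeable with the local moves of Section~2, so there is no genuine gap, but the paper's route is noticeably shorter precisely because it front-loads the rearrangement into one appeal to Proposition~\ref{prop:normalize}.
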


\begin{proof}
By the same argument in the previous proposition,
we may assume that ${\cal C}(\mathbb{M},\sigma,\mathbb{F})$
is moved to a crank form expression
\[
	{\cal C}(\mathbb{M}'', id, \mathbb{F}'')
\]
such that the first component $M_1''$ of $\mathbb{M}''$
contains $\{1\}$.

First consider the case $|M_1''|>1$.
In this case, it is easy to check that
$e{\cal C}(\mathbb{M}'', id, \mathbb{F}'')$
is again a crank form expression
of $ew$ as it is.

Next consider the case $|M_1''| = 1$.
If $M_1''$ is defective,
then we have  a scalar multiple of a crank form expression
$e{\cal C}(\mathbb{M}'', id, \mathbb{F}'') =
Q{\cal C}(\mathbb{M}'', id, \mathbb{F}'')$.
If $M_1''$ is propagating,
then applying ``addition of excrescences ($R14'$)''
and ``bumping''
in Figures~\ref{fig:roeE} and \ref{fig:bump}
we can move $e{\cal C}(\mathbb{M}'', id, \mathbb{F}'')$ to a crank form
expression.
\end{proof}

\begin{proof}[Proof of Theorem~1.2]
Let $\widetilde{A_{n}(Q)}$
be the associative algebra over $\mathbb{Z}[Q]$
abstractly defined by the generators and the relations
in Theorem~1.2.
So there exists a surjective morphism $\psi$
from $\widetilde{A_{n}(Q)}$ to $A_{n}(Q)$.
As we noted in Section~1,
we may assume that $\widetilde{A_{n}(Q)}$
is generated by the alphabets ${\cal L}_n^1$
which satisfy the relations ($R0$)-($R4$) and ($E1$)-($E5$).
Here we note that the ``geometrical moves''
we have shown previously
can be applied to any algebra which satisfies
the relations ($R0$)-($R4$) and ($E1$)-($E5$).
Hence if we associate the alphabets in ${\cal L}_n^1$
with the diagrams in Figure~\ref{fig:gen},
then we can apply the notion of {\em basic expressions},
{\em crank form expressions}
and {\em standard expressions} to the words in the alphabets
${\cal L}_n^1$ of $\widetilde{A_{n}(Q)}$.

Let $w$ be a word in the alphabet ${\cal L}_n^1$ of $\widetilde{A_{n}(Q)}$.
Suppose that $w$ is presented in a standard expression.
Then by Proposition~4.5-4.7,
$s_iw$, $fw$ and $ew$ are all moved to
(possibly scalar multiples of) crank form expressions.
By Proposition~4.4, they are moved to the standard expressions.
Since $s_i$ ($1\leq i\leq n-1$), $f$, and $e$ are
crank form expressions as they are,
by induction on the lengths of the words in the alphabets ${\cal L}_n$,
any word turn out to be equal to (a scalar multiple) of
the standard expression of a seat-plan $w$ of $\Sigma_n^1$.
Hence we have
\[
	\mbox{rank}\ \widetilde{A_{n}(Q)} \leq |\Sigma_n^1|.
\]

As Tanabe showed in \cite{Ta},
$\Sigma_n^1$ makes a basis of
$\Comp\otimes A_{n}(k) = \Comp\otimes \psi(\widetilde{A_{n}(k)})$
if $k\geq n$.
Hence $\mbox{rank}\ \Comp\otimes A_{n}(z) = |\Sigma_n^1|$
holds as far as $z$ takes any integer value $k\geq n$. 
This implies that $\psi$ is an isomorphism
and we find that the generators and the relations
in Theorem~1.2 characterize the partition algebra $A_{n}(Q)$.
\end{proof}

\section{Definition of $A_{n-\frac{1}{2}}(Q)$, a subalgebra of $A_n(Q)$}
\label{sec:5-1}
In this section,
we consider a subalgebra $A_{n-\frac{1}{2}}(Q)$ of $A_n(Q)$
generated by the special elements
$s_1, \ldots, s_{n-2}$, $f_1, \ldots, f_{n-1}$
and $e_1,\ldots, e_{n-1}$.
As we have noted in Remark~\ref{rem:gen},
$\{f_i\}$ ($1\leq n-2$)
and $\{e_i\}$ ($1\leq n-1$)
are written as products of $f=f_1$, $e = e_1$ and $s_1,\ldots, s_{n-2}$.
The special element $f_{n-1}$, however,
can not be expressed
as a product of other special elements in $A_{n-\frac{1}{2}}(Q)$,
since we deleted $s_{n-1}$ from the generators of $A_n(Q)$.
Hence $A_{n-\frac{1}{2}}(Q)$ can be defined
as a subalgebra of $A_n(Q)$
generated by the following elements:
$s_1, \ldots, s_{n-2}$, $f = f_1$, $f_* = f_{n-1}$
and $e = e_1$.
We can obtain the defining relations
among these generators just as in the case of $A_n(Q)$.

\begin{thm}\label{def:half-int-alg}
Let $\mathbb{Z}$
be the ring of rational integers
and $Q$ the indeterminate.
We put ${A}_{\frac{1}{2}}(Q) = \mathbb{Z}[Q]\cdot 1$.
For an integer $n\geq2$,
${A}_{n-\frac{1}{2}}(Q)$ is characterized by the
generators
\[
	e, f, s_1, s_2, \ldots, s_{n-2}, f_{*} \mbox{(if $n>2$)}
\]
and the relations
($R0$), ($R1'$)-($R4'$) and ($E1'$)-($E5'$) omitting
the ones which involve $s_{n-1}$ and adding the
following relations:
\begin{gather} 
f_{*}s_{n-2}s_{n-3}\cdots s_3s_2s_1
s_2s_3\cdots s_{n-3}s_{n-2}f_{*} \nonumber\\
	\quad =\, f_{*}s_{n-2}s_{n-3}\cdots s_3s_2f
	s_2s_3\cdots s_{n-3}s_{n-2}\phantom{,} \tag{$R2^*$} \\
	\quad =\, s_{n-2}s_{n-3}\cdots s_3s_2f
	s_2s_3\cdots s_{n-3}s_{n-2}f_{*},\nonumber
\end{gather}
\begin{equation}\tag{$R4^*$}
ff_{*} = f_{*}f,\quad 
ef_{*} = f_{*}e,\quad
f_{*}s_i = s_if_{*}\ \mbox{($1\leq i \leq n-3$)},
\end{equation}
\begin{equation}\tag{$E4^*$}
	\begin{array}{rcl}
	f_{*}s_{n-2}s_{n-3}\cdots s_1es_1\cdots s_{n-3}s_{n-2}f_{*}
	&=& f_{*},\\
	es_{1}s_{2}\cdots s_{n-2}f_{*}s_{n-2}\cdots s_{2}s_{1}e
	&=& e.
	\end{array}
\end{equation}
We understand $A_{1+\frac{1}{2}}(Q) = A_{2-\frac{1}{2}}(Q)$
is defined by the generators $1$, $e$ and $f$ with the relations
$e^2 = Qe$, $f^2 = f$, $efe =e$, $fef = f$.
(Hence, $A_{2-\frac{1}{2}}(Q)$ is a rank 5 module with a basis
$\{1, e, f, ef, fe\}$.)
\end{thm}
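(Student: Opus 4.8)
The plan is to mirror the proof of Theorem~1.2 almost verbatim, exploiting the fact that $A_{n-\frac12}(Q)$ sits inside $A_n(Q)$ as the subalgebra fixing the point $n$ (equivalently, the subalgebra of seat-plans in which $\{n\}$ and $\{n'\}$ form two singleton parts — no, more precisely, in which $n$ and $n'$ lie in the \emph{same} part together with possibly other symbols, but that part's behaviour at the rightmost strand is rigid). Concretely, I would first identify the natural diagram basis: the set $\Sigma_{n-\frac12}^1$ of seat-plans $w\in\Sigma_n^1$ such that $n$ and $n'$ belong to the same part of $w$. One checks that this set is closed under the product of $A_n(Q)$ and is spanned by the generators $e,f,s_1,\dots,s_{n-2},f_*$; its cardinality is $B_{2n-1}$ (the Bell number), by the bijection collapsing the common part containing $n,n'$. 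This plays the role of $\Sigma_n^1$ in the proof of Theorem~1.2.

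Next I would verify that the listed relations $(R0)$, $(R1')$–$(R4')$, $(E1')$–$(E5')$ (with $s_{n-1}$ deleted) together with $(R2^*)$, $(R4^*)$, $(E4^*)$ hold among the diagrams — this is routine, since all of them are instances of the basic relations $(R0)$–$(R4)$, $(E1)$–$(E5)$ of $A_n(Q)$ once one observes that $s_{n-2}\cdots s_1$ conjugates $f=f_1$ to $f_{n-1}=f_*$ up to the position of the rightmost strand, so $(R2^*)$ and $(E4^*)$ are just the $A_n(Q)$-relations $f_1f_{n-1}=f_1f_n\cdots$ transported into the subalgebra. Then I would set up the abstract algebra $\widetilde{A_{n-\frac12}(Q)}$ on the given generators and relations, get a surjection $\psi$ onto $A_{n-\frac12}(Q)$, and adapt the machinery of basic/crank/standard expressions: a word in the sub-alphabet ${\cal L}_{n-\frac12}^1=\{s_1,\dots,s_{n-2},f_1,\dots,f_{n-1},e_1,\dots,e_{n-1}\}$ has a crank-form expression whose upper and lower sequences always keep $n$ (resp.\ $n'$) glued to a fixed strand, so Propositions~4.4–4.7 go through with the single modification that we never apply $s_{n-1}$; the extra generator $f_*=f_{n-1}$ is handled exactly like $f=f_1$ in Proposition~4.6, acting on the two parts containing $n-1$ and $n$. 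This gives $\mathrm{rank}\,\widetilde{A_{n-\frac12}(Q)}\le|\Sigma_{n-\frac12}^1|$.

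For the matching lower bound I would invoke the same specialisation argument used at the end of the proof of Theorem~1.2: for integer $z=k\ge n$, $A_{n-\frac12}(k)$ acts on a tensor-space module (the ``half'' of the Jones/Martin picture, i.e.\ restricting the $A_n$-action and fixing the last tensor factor), and the diagrams of $\Sigma_{n-\frac12}^1$ act by linearly independent operators — this is Tanabe's result~\cite{Ta} restricted to the subalgebra, or can be deduced directly from it since $A_{n-\frac12}(Q)\subset A_n(Q)$ and the $\Sigma_{n-\frac12}^1$ are a subset of a known basis. Hence $\mathrm{rank}\,\Comp\otimes A_{n-\frac12}(z)=|\Sigma_{n-\frac12}^1|$ for all integers $z\ge n$, forcing $\psi$ to be an isomorphism. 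Finally I would dispose of the base case $n=2$ by hand: with only $e,f$ and relations $e^2=Qe$, $f^2=f$, $efe=e$, $fef=f$, every word reduces to one of $1,e,f,ef,fe$ (any longer alternating word collapses via $efe=e$ or $fef=f$, and $fe\cdot f=f$, $ef\cdot e = e$, etc.), and these five diagrams are linearly independent in $A_2(Q)$, giving the rank-$5$ statement.

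The main obstacle I anticipate is \emph{not} the counting or the specialisation but the bookkeeping in the crank-form reduction: one must check that the absence of $s_{n-1}$ never blocks a move that was available in $A_n(Q)$. The delicate point is the action of $f_*=f_{n-1}$ — in $A_n(Q)$ one would freely conjugate $f_{n-1}$ past $s_{n-1}$, which is now forbidden; so I must confirm that in every standard expression the rightmost strand $n$ is already where $f_*$ needs it (it is, by the normal-form choice $\min M_1<\cdots<\min M_p$ and the convention gluing $n,n'$ to the same part), so that $f_*$ acts directly by ``bumping'' as in Figure~\ref{fig:bump} without ever needing $s_{n-1}$. Making this precise — essentially, that the subalgebra's standard expressions are exactly the $A_n(Q)$-standard expressions of seat-plans in $\Sigma_{n-\frac12}^1$ and that all required moves stay inside the sub-alphabet — is the one place real care is needed; everything else is a transcription of Section~4.
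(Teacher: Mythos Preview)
Your overall strategy is exactly the paper's: identify the diagram basis $\Sigma_{n-\frac12}^1$ (seat-plans with $n,n'$ in one part, counted by $B_{2n-1}$), verify the relations, set up the abstract algebra $\widetilde{A_{n-\frac12}(Q)}$ with a surjection $\psi$, bound its rank above by reducing every word to a standard expression, and bound below by specialisation. The base case $n=2$ is handled identically.

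There is one device in the paper's proof that you do not use, and it speaks precisely to the obstacle you flag. Rather than work with the sub-alphabet $\{s_1,\dots,s_{n-2},f_1,\dots,f_{n-1},e_1,\dots,e_{n-1}\}$ directly, the paper first applies the index transposition $i\leftrightarrow n-i+1$ (noted in Section~2 as a symmetry of the relations). This carries $A_{n-\frac12}(Q)$ to the subalgebra generated by ${\cal L}^1_{n-\frac12}=\{f_1,\dots,f_{n-1},s_2,\dots,s_{n-1},e_2,\dots,e_n\}$, in which the distinguished propagating part now contains $1$ and $1'$ rather than $n$ and $n'$. The payoff is that the standard-expression conventions of Section~3.3 (ordering parts by their minimum element) then force this part to be $M_1$ automatically, so its strand is the leftmost vertical line in every standard expression and Propositions~4.4--4.7 apply verbatim with the missing generator being $s_1$ rather than $s_{n-1}$. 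Your worry --- that the rightmost strand $n$ might not sit where $f_*$ needs it without an illegal $s_{n-1}$-move --- disappears, because after the flip the fixed strand is at position~$1$, where the normal-form ordering already pins it. Your direct approach can be pushed through, but would require re-proving an analogue of Proposition~4.6 for $f_{n-1}$ acting at the right end of the sequence, whereas the paper's flip lets it reuse the existing proposition unchanged.

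A minor point: for the lower bound the paper cites Martin--Rollet~\cite{MR} (with $k>n$) rather than Tanabe, though your observation that $\Sigma_{n-\frac12}^1$ is a subset of the known basis $\Sigma_n^1$ of $A_n(Q)$ already gives linear independence and is a perfectly good shortcut.
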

The relations ($R2^*$) correspond to the relations
$f_{n-1}s_{n-2}f_{n-1} = f_{n-1}f_{n-2} = f_{n-2}f_{n-1}$.
We deduce $f_{*}s_{n-2}f_{*} = f_{*}s_{n-2}f_{*}s_{n-2}
= f_{*}s_{n-2}f_{*}s_{n-2}$ from ($R2^*$).
\begin{proof}
First we note that all the generators of $A_{n-\frac{1}{2}}(Q)$
have the part which contains $n$ and $n'$ simultaneously.

We consider the transpositions of indices
$i\leftrightarrow n-i+1$.
These transpositions make
$A_{n-\frac{1}{2}}(Q)$ a subalgebra of $A_{n}(Q)$
generated by
\[
	{\cal L}_{n-\frac{1}{2}}^1 = \{f_1, \ldots, f_{n-1},
	s_2, \ldots, s_{n-1}, e_2, \ldots, e_n\}.
\]
By the relation ($R0$), $A_{n-\frac{1}{2}}(Q)$ is actually
generated by letters
$\{f_1$, $f_2$, $e_2$ and $s_2,\ldots, s_{n-1}\}$.
Each of these generators has a part which includes $\{1, 1'\}$.
In the following in this section, we suppose that $A_{n-\frac{1}{2}}(Q)$
is generated by the letters in ${\cal L}^1_{n-\frac{1}{2}}$.
The $\mathbb{Z}[Q]$ bases of $A_{n-\frac{1}{2}}(Q)$
consist of $\Sigma^1_{n-\frac{1}{2}}$
a subset of seat-plans in $\Sigma^1_n$
which have at least one propagating part
which contains $1$ and $1'$ simultaneously.
In the diagram of the standard expression of a seat-plan of
$\Sigma^1_{n-\frac{1}{2}}$,
the vertices $1$ and $1'$ are joined by a  vertical line.
Shrinking this vertical line to one vertex,
we have one to one correspondences between $\Sigma^1_{n-\frac{1}{2}}$
and
the set of the set-partitions of order $2n-1$.
(Hence we find $|\Sigma^1_{n-\frac{1}{2}}| = B_{2n-1}$, the Bell number.)

Under this preparation, we prove the theorem.
Since the relations in the theorem allow us to use
all the required local moves, we can show
just in the course of the arguments of Section~4
that any word in the alphabet ${\cal L}^1_{n-\frac{1}{2}}$
is equal to (possibly a scalar multiple of) a standard expression
in the abstract algebra $\widetilde{A_{n-\frac{1}{2}}(Q)}$.

Hence we have
\[
	\mbox{rank}\ \widetilde{A_{n-\frac{1}{2}}}(Q)
	\leq |\Sigma_{n-\frac{1}{2}}^1|.
\]

As Murtin and Rollet showed in \cite{MR},
$\Sigma_{n-\frac{1}{2}}^1$ makes a basis of
$\Comp\otimes A_{n-\frac{1}{2}}(k)
= \Comp\otimes \psi(\widetilde{A_{n-\frac{1}{2}}(k)})$
if $k>n$.
Hence $\mbox{rank}\ \Comp\otimes A_{n}(z) = |\Sigma_{n-\frac{1}{2}}^1|$
holds as far as $z$ takes any integer value $k> n$. 
This implies that $\psi$ is an isomorphism
and we find that the generators and the relations
in the theorem characterize the subalgebra
$A_{n-\frac{1}{2}}(Q)$.
\end{proof}

\section{Bratteli diagram of the partition algebras}\label{sec:bra}

In this section, we get back to the original definition
of $A_{n-\frac{1}{2}}(Q)$.
({\it i. e.} $A_{n-\frac{1}{2}}(Q)$ is
generated by $s_1, \ldots, s_{n-2}$, $f_1, \ldots, f_{n-1}$
and $e_1,\ldots, e_{n-1}$.)
Since, $A_{n-\frac{1}{2}}(Q)$ contains all the generators of
${A}_{n-1}(Q)$, it becomes
a subalgebra of ${A}_{n-\frac{1}{2}}(Q)$.
Hence we obtain the sequence of inclusions
$A_0(Q)\subset A_\frac{1}{2}(Q) \subset \cdots \subset A_{i-\frac{1}{2}}(Q)
\subset A_{i}(Q)\subset A_{i+\frac{1}{2}}(Q)\subset \cdots$.

First we define a graph $\Gamma_n$ [resp. $\Gamma_{n+\frac{1}{2}}$]
for a non-negative integer $n\in\mathbb{Z}_{\geq 0}$.
Then we define the sets of {\em tableaux} as sets of paths on this graph.
Figure~\ref{fig:brad} will help the reader to understand
the recipe.
\fig{fig:brad}{$\Gamma_4$}{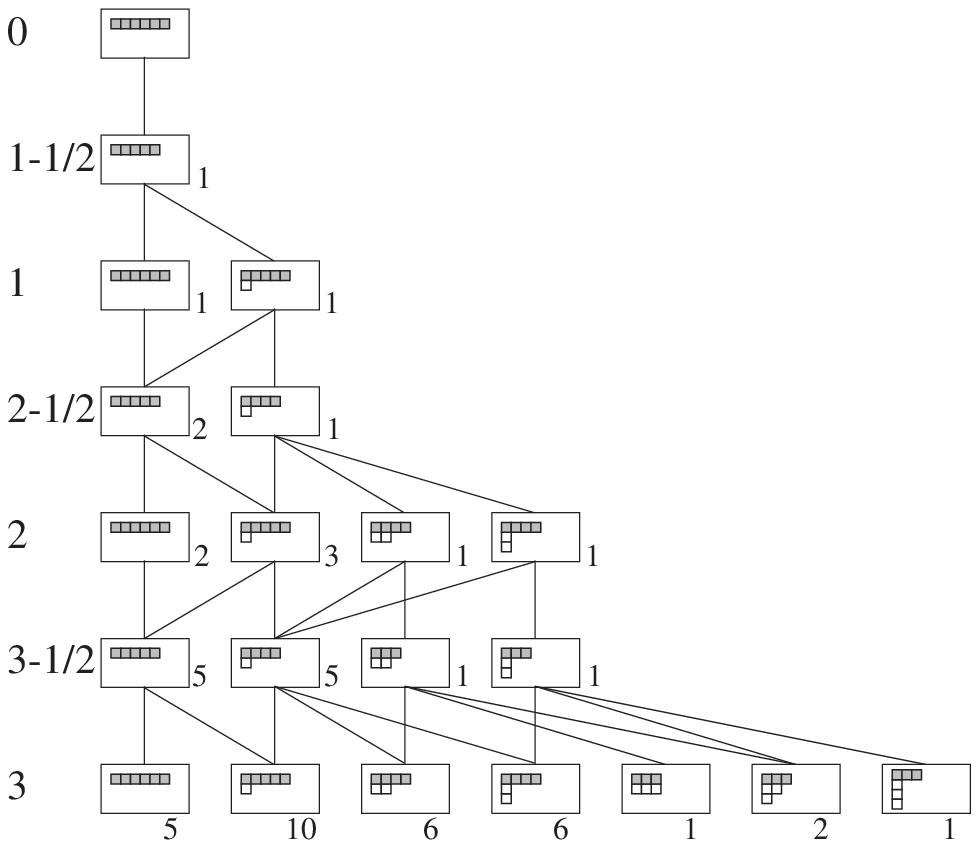}

For the moment, we assume that $Q$ is a sufficiently large integer.
Let
$\lambda = (\lambda_1, \lambda_2, \ldots, \lambda_l)$
be a partition.
For this $\lambda$,
define 
\begin{eqnarray*}
\widetilde{\lambda}
&=& (Q-|\lambda|, \lambda_1, \lambda_2, \ldots, \lambda_l)\\
\big[\mbox{resp.}\ \widehat{\lambda}
&=& (Q-1-|\lambda|, \lambda_1, \lambda_2, \ldots, \lambda_l)\big]
\end{eqnarray*}
to be a partition of size $Q$ [resp. $Q-1$].
Pictorially,
$\widetilde{\lambda}$
[resp. $\widehat{\lambda}$]
is obtained by adding $Q-|\lambda|$
[resp. $Q-1-|\lambda|$] boxes on the top of $\lambda$.

Let
$
P_{\leq i}
= \bigcup_{j=0}^i\{ \lambda\ |\ \lambda\vdash j\}
$
be a set of Young diagrams of size less than or equal to $i$.
We define $\bfLambda_i$ and $\bfLambda_{i+\frac{1}{2}}$ to be
\[
\bfLambda_i = \{\widetilde{\lambda}\ |\ \lambda\in P_{\leq i}\}
\mbox{ and } 
\bfLambda_{i+\frac{1}{2}} = \{\widehat{\lambda}\ |\ \lambda\in P_{\leq i}\},
\]
which are set of Young diagrams of size $Q$ and $Q-1$ respectively.

Under these preparations
we define a graph $\Gamma_n$ [resp. $\Gamma_{n+\frac{1}{2}}$]
which consists of the vertices labeled by:
\[
\left(
	\bigsqcup_{i=0,1, \ldots, n-1}
	(\bfLambda_i \sqcup \bfLambda_{i+\frac{1}{2}})
\right)
\bigsqcup\bfLambda_n
\quad
\left[	\mbox{resp.}\  
\left(
	\bigsqcup_{i=0,1, \ldots, n}
	(\bfLambda_i \sqcup \bfLambda_{i+\frac{1}{2}})
\right)
\right]
\]
and the edges joined by either of the following rule:
\begin{itemize}
\item join $\widetilde{\lambda}\in\bfLambda_{i}$
and $\widehat{\mu}\in\bfLambda_{i+\frac{1}{2}}$ if
$\widehat{\mu}$ is obtained from $\widetilde{\lambda}$
by removing a box ($i = 0, 1, 2, \ldots n-1$)
[resp. ($i=0, 1, 2, \dots, n$)],
\item
join $\widehat{\mu}\in\bfLambda_{i-\frac{1}{2}}$
and $\widetilde{\lambda}\in\bfLambda_i$ if
$\widetilde{\lambda}$ is obtained from $\widehat{\mu}$
by adding a box ($i = 1, 2, \ldots n$).
\end{itemize}
For a pair of Young diagrams $(\bfalpha, \bfbeta)$,
if $\bfbeta$ is obtained from $\bfalpha$ by one of the
method above, we write this as $\bfalpha\smile\bfbeta$.

Finally, we define the sets of the tableaux.
For a half integer $n\in\frac{1}{2}\mathbb{Z}$
and $\bfalpha\in\bfLambda_n$,
we define ${\mathbb T}(\bfalpha)$, {\em tableaux of shape $\bfalpha$},
to be
\begin{eqnarray*}
{\mathbb T}(\bfalpha)&=&
	\{P = (\bfalpha^{(0)}, \bfalpha^{(1/2)}, \ldots, \bfalpha^{(n)})\ |
	\ \bfalpha^{(j)} \in \bfLambda_j\ (j = 0, 1/2, \ldots, n),\\
& &	\quad\bfalpha^{(n)} = \bfalpha, 
	\bfalpha^{(j)}\smile\bfalpha^{(j+1/2)}
	\ (j = 0, 1/2, \ldots, n-1/2)\}.
\end{eqnarray*}


\section{Construction of representation}\label{sec:rep}

Now we have defined the sets of tableaux,
we define linear transformations among the tableaux.

Let ${\mathbb Q}$ be the field of rational numbers
and $K_0 = {\mathbb Q}(Q)$ its extension.
In the following, the linear transformations are defined over $K_0$.
If they preserve the relations defined in the previous sections,
they define representations
of ${A}_n = {A}_n(Q)\otimes K_0$.
Similar methods are used for example
in the references~\cite{AK,GHJ,Mu,W1,W2,Ko2}.

Let ${\mathbb V}(\bfalpha)
= \oplus_{P \in {\mathbb T}(\bfalpha) }K_0 v_P$ be
a vector space over $K_0$ with the standard basis
$\{v_P|P\in {\mathbb T}(\bfalpha)\}$.

For generators $e_i$, $f_i$ and $s_i$ of ${A}_n$,
we define linear maps $\rho_{\bfalpha}(e_i)$, $\rho_{\bfalpha}(f_i)$
and $\rho_{\bfalpha}(s_i)$
on ${\mathbb V}(\bfalpha)$ giving the matrices
$E_i$ $F_i$ and $M_i$ respectively with respect to
the basis $\{ v_P | P\in {\mathbb T}(\bfalpha) \}$.

\subsection{Definition of $\rho_{\bfalpha}(e_i)$}
Firstly, we define a linear map for $e_i$.

For a tableaux
$P = (\bfalpha^{(0)}, \bfalpha^{(1/2)}, \ldots, \bfalpha^{(n)})$
of ${\mathbb T}(\bfalpha)$, we define
$\rho_{\bfalpha}(e_i)(v_P)
= \sum_{Q \in {\mathbb T}(\bfalpha)}(E_i)_{QP}v_Q$.
Let $Q = (\bfalpha^{\prime(0)}, \bfalpha^{\prime(1/2)},
\ldots, \bfalpha^{\prime(n)})$.

If there is an
$i_0 \in \{1/2, 1, \ldots, n-1/2 \} \setminus \{i-1/2\}$
such that $\bfalpha^{(i_0)}\neq \bfalpha^{\prime(i_0)}$,
then we put
\[
	(E_i)_{QP} = 0.
\]
In the following, we consider the case that
$\bfalpha^{(i_0)} = \bfalpha^{\prime(i_0)}$
for $i_0\in\{0, 1/2, 1, \ldots, n-1/2\}\setminus\{i-1/2\}$.

If
$\bfalpha^{(i-1)}$ and $\bfalpha^{(i)}$
are not labeled by the same Young diagram,
then we put
\[
	(E_i)_{QP} = 0.
\]

We consider the case $\bfalpha^{(i-1)}$ and $\bfalpha^{(i)}$
have the same label $\widetilde{\lambda}$.
In this case, the possible vertices as $\bfalpha^{(i-1/2)}$
have labels $\{\widetilde{\lambda}^{-}_{(s)}\}$,
which are obtained by removing one box from $\widetilde{\lambda}$.
Let $\{Q_s\}$ be the set of tableaux
obtained from $P$ by replacing $\bfalpha^{(i-1/2)}$
with $\widetilde{\lambda}^{-}_{(s)}$.

Then we define $(E_i)_{QP}$ to be
\[
	(E_i)_{Q_sP}
	= \frac{h(\widetilde{\lambda})}{h(\widetilde{\lambda}^{-}_{(s)})}.
\]
Here $h(\lambda)$ is the product of hook lengths defined by
\[
	h(\lambda)  = \prod_{x\in\lambda} h_{\lambda}(x)
\]
and $h_{\lambda}(x)$ is the {\em hook-length} at $x\in\lambda$.

Note that the matrix $E_i$ is determined
by
the label $\widetilde{\lambda}$ itself
not by the vertex at which the tableau $P$ goes through.
In other words, if another vertex in different level, say $i'$,
has the same label $\widetilde{\lambda}$, then $E_{i'}$ becomes
the same matrix.

Let $v(\lambda^-_{(s)}, \lambda)$ be the standard vector
which corresponds to a tableau whose $(i-1)$-st, $(i-1/2)$-th and $i$-th
coordinate $(\bfalpha^{(i-1)}, \bfalpha^{(i-1/2)}, \bfalpha^{(i)})$
are labeled by $(\lambda, \lambda^-_{(s)}, \lambda)$.
Then for a tableau $P$ which goes through
$\lambda$ at the $(i-1)$-st and the $i$-th coordinate of $P$,
we have
\[
\rho(e_i)(v_P)
=
\sum_{s'} \frac{h(\lambda)}{h(\lambda^{-}_{(s')})}v(\lambda^-_{(s')}, \lambda).
\]
Here $\lambda^{-}_{(s')}$ runs through Young diagrams
obtained from $\lambda$ by removing one box.

\fig{fig:repE4a}{Representation spaces for $\rho(e_i)$}{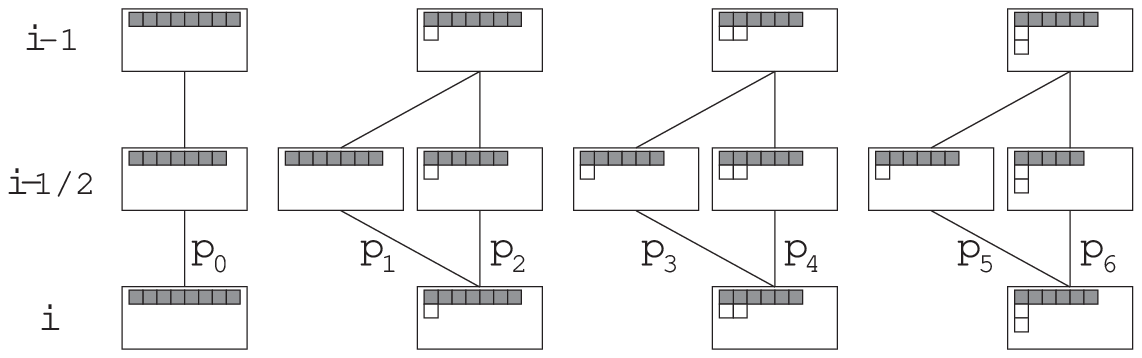}
\fig{fig:repE4b}{Representation spaces for $\rho(e_i)$}{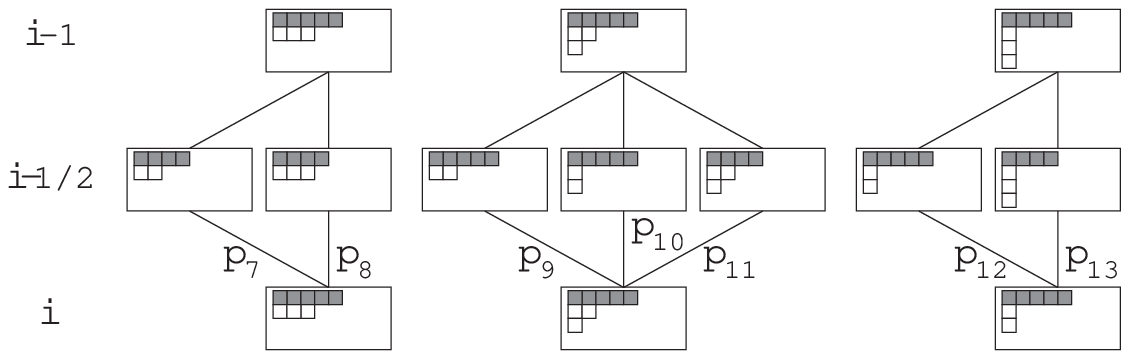}

\begin{example}
Suppose that tableaux $\{p_r\}$ goes through
paths in pictures illustrated in Figure~\ref{fig:repE4a}
or \ref{fig:repE4b}.
Then we have
\begin{eqnarray*}
\rho(e_i)(v_0) &=&
\frac{h(\widetilde{\emptyset})}{h(\widehat{\emptyset})}v_0 = Qv_0,\\
\rho(e_i)(v_1\ v_2) &=& (v_1\ v_2)
\begin{pmatrix}
	h(\widetilde{\abox})/h(\widehat{\emptyset})
	&h(\widetilde{\abox})/h(\widehat{\emptyset})\\
	h(\widetilde{\abox})/h(\widehat{\abox})
	&h(\widetilde{\abox})/h(\widehat{\abox})
\end{pmatrix}\\
&=& (v_1\ v_2)
\begin{pmatrix}
	\frac{Q}{Q-1} &\frac{Q}{Q-1}\\
	\frac{Q(Q-2)}{Q-1} &\frac{Q(Q-2)}{Q-1}
\end{pmatrix}
\\
\rho(e_i)(v_3\ v_4) &=& (v_3\ v_4)
\begin{pmatrix}
	h(\widetilde{\horibox})/h(\widehat{\abox})
	&h(\widetilde{\horibox})/h(\widehat{\abox})\\
	h(\widetilde{\horibox})/h(\widehat{\horibox})
	&h(\widetilde{\horibox})/h(\widehat{\horibox})
\end{pmatrix},\\
&=& (v_3\ v_4)
\begin{pmatrix}
	\frac{2(Q-2)}{Q-3} &\frac{2(Q-2)}{Q-3}\\
	\frac{(Q-1)(Q-4)}{Q-3} &\frac{(Q-1)(Q-4)}{Q-3}
\end{pmatrix}\\
\rho(e_i)(v_5\ v_6) &=& (v_5\ v_6)
\begin{pmatrix}
	h(\widetilde{\vertbox})/h(\widehat{\abox})
	&h(\widetilde{\vertbox})/h(\widehat{\abox})\\
	h(\widetilde{\vertbox})/h(\widehat{\vertbox})
	&h(\widetilde{\vertbox})/h(\widehat{\vertbox})
\end{pmatrix},\\
 &=& (v_5\ v_6)
\begin{pmatrix}
	\frac{2Q}{Q-1} &\frac{2Q}{Q-1}\\
	\frac{Q(Q-3)}{Q-1} &\frac{Q(Q-3)}{Q-1}
\end{pmatrix}.
\end{eqnarray*}
Here $v_i$ is the standard vector which corresponds to $p_i$.
Similarly for the bases $\langle v_7, v_8\rangle$,
$\langle v_9, v_{10}, v_{11}\rangle$
and
$\langle v_{12}, v_{13}\rangle$,
we have the following matrices respectively:
\begin{eqnarray*}
&&\begin{pmatrix}
	\frac{3(Q-4)}{Q-5} &\frac{3(Q-4)}{Q-5}\\
	\frac{(Q-2)(Q-6)}{Q-5} &\frac{(Q-2)(Q-6)}{Q-5}
\end{pmatrix},\\
&&
\begin{pmatrix}
	\frac{3(Q-1)}{2(Q-2)} &\frac{3(Q-1)}{2(Q-2)} &\frac{3(Q-1)}{2(Q-2)}\\
	\frac{3(Q-3)}{2(Q-4)} &\frac{3(Q-3)}{2(Q-4)} &\frac{3(Q-3)}{2(Q-4)}\\
	\frac{(Q-1)(Q-3)(Q-5)}{(Q-2)(Q-4)}&
	\frac{(Q-1)(Q-3)(Q-5)}{(Q-2)(Q-4)}&
	\frac{(Q-1)(Q-3)(Q-5)}{(Q-2)(Q-4)}
\end{pmatrix},\\
&&
\begin{pmatrix}
	\frac{3Q}{Q-1} &\frac{3Q}{Q-1}\\
	\frac{Q(Q-4)}{Q-1} &\frac{Q(Q-4)}{Q-1}
\end{pmatrix}.
\end{eqnarray*}
\end{example}

\subsection{Definition of $\rho_{\bfalpha}(f_i)$}
Next, we define a linear map for $f_i$.

For a tableaux
$P = (\bfalpha^{(0)}, \bfalpha^{(1/2)}, \ldots, \bfalpha^{(n)})$
of ${\mathbb T}(\bfalpha)$, we define
$\rho_{\bfalpha}(f_i)(v_P)
= \sum_{Q \in {\mathbb T}(\bfalpha)}(F_i)_{QP}v_Q$.
Let $Q = (\bfalpha^{\prime(0)}, \bfalpha^{\prime(1/2)},
\ldots, \bfalpha^{\prime(n)})$.

If there is an
$i_0 \in \{1/2, 1, \ldots, n-1/2 \} \setminus \{i\}$
such that $\bfalpha^{(i_0)}\neq \bfalpha^{\prime(i_0)}$,
then we put
\[
	(F_i)_{QP} = 0.
\]
In the following, we consider the case that
$\bfalpha^{(i_0)} = \bfalpha^{\prime(i_0)}$
for $i_0\in\{0, 1/2, 1, \ldots, n-1/2\}\setminus\{i\}$.

If
$\bfalpha^{(i-1/2)}$ and $\bfalpha^{(i+1/2)}$
are not labeled by the same Young diagram,
then we put
\[
	(F_i)_{QP} = 0.
\]

We consider the case $\bfalpha^{(i-1/2)}$ and $\bfalpha^{(i+1/2)}$
have the same label $\widehat{\mu}$.
In this case, the possible vertices as $\bfalpha^{(i)}$
have labels $\{\widehat{\mu}^{+}_{(r)}\}$,
which are obtained by adding one box to $\widetilde{\mu}$.
Suppose that $\bfalpha^{(i)}$,
the $i$-th coordinate of $P$,
has its label $\widetilde{\mu}^{+}_{(r_0)}$.
Let $Q$ be a tableau
obtained from $P$ by replacing $\bfalpha^{(i)}$
with one of $\{\widehat{\mu}^{+}_{(r)}\}$.

Then we define $(F_i)_{QP}$ to be
\[
	(F_i)_{Q_rP} = \frac{h(\widehat{\mu})}{h(\widehat{\mu}^{+}_{(r_0)})}.
\]

Let $v(\mu^+_{(r)}, \mu)$ be the standard vector
which corresponds to a tableau whose $(i-1/2)$-th, $i$-th and $(i+1/2)$-th
coordinate $(\bfalpha^{(i-1/2)}, \bfalpha^{(i)}, \bfalpha^{(i+1/2)})$
are labeled by $(\mu, \mu^+_{(r)}, \mu)$.
Then for a tableau $P$ which goes through
$\mu$ at the $(i-1/2)$-th and the $(i+1/2)$-th coordinate of $P$,
we have
\[
\rho(f_i)(v_P)
=
\sum_{r} \frac{h(\mu)}{h(\mu^{+}_{(r_0)})}v(\mu^+_{(r)}, \mu).
\]
Here $\mu^{+}_{(r)}$ runs through Young diagrams
obtained from $\mu$ by adding one box.

\fig{fig:repF4}{Representation spaces for $\rho(f_i)$}{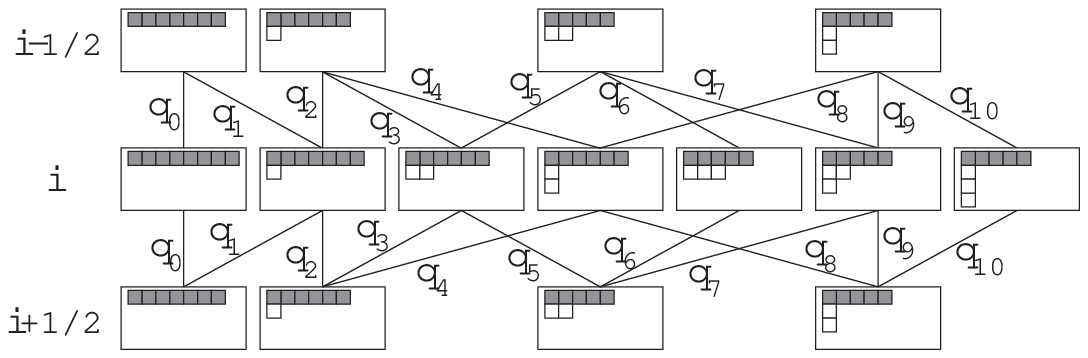}
\begin{example}
Suppose that tableau $\{q_r\}$ go through
paths in the picture illustrated in Figure~\ref{fig:repF4}.
Then we have
\[
\rho(f_i)(v_0\ v_1)
= (v_0\ v_1)
\begin{pmatrix}
	h(\widehat{\emptyset})/h(\widetilde{\emptyset})
	&h(\widehat{\emptyset})/h(\widetilde{\abox})\\
	h(\widehat{\emptyset})/h(\widetilde{\emptyset})
	&h(\widehat{\emptyset})/h(\widetilde{\abox})
\end{pmatrix}
= (v_0\ v_1)
\begin{pmatrix}
	\frac{1}{Q} &\frac{Q-1}{Q}\\
	\frac{1}{Q} &\frac{Q-1}{Q}
\end{pmatrix}
\]
and
\begin{eqnarray*}
\rho(f_i)(v_2\ v_{3}\ v_{4})
&=& (v_2\ v_{3}\ v_{4})
\begin{pmatrix}
	h(\widehat{\abox})/h(\widetilde{\abox})
	&h(\widehat{\abox})/h(\widetilde{\horibox})
	&h(\widehat{\abox})/h(\widetilde{\vertbox})\\
	h(\widehat{\abox})/h(\widetilde{\abox})
	&h(\widehat{\abox})/h(\widetilde{\horibox})
	&h(\widehat{\abox})/h(\widetilde{\vertbox})\\
	h(\widehat{\abox})/h(\widetilde{\abox})
	&h(\widehat{\abox})/h(\widetilde{\horibox})
	&h(\widehat{\abox})/h(\widetilde{\vertbox})
\end{pmatrix}\\
&=& (v_2\ v_{3}\ v_{4})
\begin{pmatrix}
	\frac{Q-1}{Q(Q-2)} &\frac{Q-3}{2(Q-2)} &\frac{Q-1}{2Q}\\
	\frac{Q-1}{Q(Q-2)} &\frac{Q-3}{2(Q-2)} &\frac{Q-1}{2Q}\\
	\frac{Q-1}{Q(Q-2)} &\frac{Q-3}{2(Q-2)} &\frac{Q-1}{2Q}
\end{pmatrix}.
\end{eqnarray*}
Here $v_i$ is the standard vector which corresponds to $q_i$.
Similarly, for the bases
$\langle v_5, v_6, v_7\rangle$ and
$\langle v_8, v_{9}, v_{10}\rangle$
we have the following matrices respectively:
\[
\begin{pmatrix}
	\frac{Q-3}{(Q-1)(Q-4)} &\frac{Q-5}{3(Q-4)} &\frac{2(Q-2)}{3(Q-1)}\\
	\frac{Q-3}{(Q-1)(Q-4)} &\frac{Q-5}{3(Q-4)} &\frac{2(Q-2)}{3(Q-1)}\\
	\frac{Q-3}{(Q-1)(Q-4)} &\frac{Q-5}{3(Q-4)} &\frac{2(Q-2)}{3(Q-1)}
\end{pmatrix},\quad
\begin{pmatrix}
	\frac{Q-1}{Q(Q-3)} &\frac{2(Q-4)}{3(Q-3)} &\frac{Q-1}{3Q}\\
	\frac{Q-1}{Q(Q-3)} &\frac{2(Q-4)}{3(Q-3)} &\frac{Q-1}{3Q}\\
	\frac{Q-1}{Q(Q-3)} &\frac{2(Q-4)}{3(Q-3)} &\frac{Q-1}{3Q}
\end{pmatrix}.
\]
\end{example}

\subsection{Definition of $\rho_{\bfalpha}(s_i)$}
Finally, we define linear maps for $s_i$.
Unfortunately,
we do not have uniform description for $\rho_{\bfalpha}(s_i)$,
except for ``non-reductive'' paths.
So first we define $\rho_{\bfalpha}(s_i)$
for the non-reductive paths.
Then we define $\rho_{\bfalpha}(s_1)$
and $\rho_{\bfalpha}(s_2)$ for ``reductive'' paths one by one.

\subsubsection*{Non-Reductive Case}
In the following, we use notation $\mu\ssubset\lambda$
if a Young diagram $\lambda$
is obtained from a Young diagram $\mu$ by
adding one box.

For $1\leq j\leq i$,
let ${\nu}$, ${\mu}$, ${\lambda}$
be Young diagrams of size $j-1$, $j$ and $j+1$ respectively
such that $\nu\ssubset\mu\ssubset\lambda$.
If a tableau $P$ of $\mathbb{T}(\bfalpha)$
goes through $\widetilde{\nu}$, $\widetilde{\mu}$ and $\widetilde{\lambda}$
at the $(i-2)$-nd, the $(i-1)$-st and the $i$-th coordinate,
then $P$ goes through $\widehat{\nu}$ and $\widehat{\mu}$
at the $(i-3/2)$-th and the $(i-1/2)$-th coordinate.
We call such a tableau {\em non-reductive} at $i$.
If a tableau $P$ does not satisfy the property above,
then we call $P$ {\em reductive} at $i$.

Recall that if $\nu\ssubset\mu\ssubset\lambda$,
then we can define the {\em axial distance} $d = d(\nu, \mu, \lambda)$.
Namely, if $\mu$ differs from $\nu$
in the $r_0$-th row and the $c_0$-th column only,
and $\lambda$ differs from $\mu$
in the $r_1$-th row and the $c_1$-th column only,
then $d = d(\nu, \mu, \lambda)$ is defined by
\begin{equation*}
d = d(\nu, \mu, \lambda) = (c_1 - r_1) - (c_0 - r_0)
 = \left\{
	\begin{array}{ll}
		h_{\lambda}(r_1, c_0) -1 & \mbox{ if } r_0 \geq r_1,\\
		1 - h_{\lambda}(r_0, c_1) & \mbox{ if } r_0 < r_1.
	\end{array}
	\right.
\end{equation*}
Here $h_{\lambda}(i,j)$ is the {\em hook-length} at $(i,j)$ in $\lambda$.

If $|d|\geq 2$,
then there is a unique Young diagram
$\mu^{\prime}\neq \mu$
which satisfies
$\nu\ssubset\mu^{\prime}\ssubset\lambda$.
Let $P'$
be a tableau of shape $\bfalpha$ which are obtained from $P$
by replacing $(i-1)$-st and $(i-1/2)$-th coordinates
of $P$ with $\widetilde{\mu'}$ and $\widehat{\mu'}$ respectively.
For the standard vectors $v_P$ and $v_{P'}$ which correspond to $P$ and $P'$,
we define the linear map $\rho_{\bfalpha}(s_i)$ as follows:
\begin{equation}\label{eq:non-red}
	\rho_{\bfalpha}(s_i)\ :\ (v_{P}, v_{P'})
	\longmapsto (v_{P}, v_{P'})
	\left(
		\begin{array}{cc}
			1/d	&	\left(1-1/d^2\right)/c\\
			c	&	-1/d
		\end{array}
	\right),
\end{equation}
where we can arbitrarily chose $c\in K_0\setminus\{0\}$.
If we put
\begin{equation}\label{eq:ad}
	a_d = 1/d\quad \mbox{and}\quad b_d = 1-a_d^2,
\end{equation}
then the matrix in the expression~\eqref{eq:non-red} is written as follows:
\begin{equation*}
	\left(
		\begin{array}{rr}
			a_{d} & b_{d}/c\\
			c & -a_{d}
		\end{array}
	\right).
\end{equation*}
If $|d_1|=1$,
then there does not exist a
distinct Young diagram
$\mu^{\prime}$
which satisfies
$\nu\ssupset\mu^{\prime}\ssupset\lambda$
other than $\mu$.
In this case, we define $\rho_{\bfalpha}(s_i)$ to be
\[
	\rho_{\bfalpha}(s_i)\ :\ v_{P}
	\longmapsto a_d v_{P}.
\]
Here $a_d$ is the one defined by \eqref{eq:ad}.

\begin{example}
Suppose that a tableau $p_1$ of $\mathbb{T}(\bfalpha)$ goes through
$\widetilde{\emptyset}$, $\widetilde{\abox}$ and $\widetilde{\horibox}$
at the 0-th, the 1-st and the 2-nd coordinates respectively,
then for the standard vector $u_1$ which corresponds to $p_1$ we have
\[
	\rho_{\bfalpha}(s_1) u_1 = u_1.
\]
For the standard vector $v_2$ which corresponds to $p_2$,
a tableau of $\mathbb{T}(\bfalpha)$ which goes  through
$\widetilde{\emptyset}$, $\widetilde{\abox}$ and $\widetilde{\vertbox}$
at the 0-th, the 1-st and the 2-nd coordinates respectively,
we have
\[
	\rho_{\bfalpha}(s_1) u_2 = -u_2.
\]
\end{example}

\begin{example}
Let $\lambda^{(1)} = (3)$, $\lambda^{(2)} = (2,1)$
and $\lambda^{(3)} = (1,1,1)$ be partitions of 3.
Suppose that
tableaux $q_1$ and $q_2$ of $\mathbb{T}(\bfalpha)$ both go through
$\widetilde{\abox}$ and $\widetilde{\horibox}$
at the 1-st and the 2-nd coordinates respectively,
and
tableaux $q_3$ and $q_4$ of $\mathbb{T}(\bfalpha)$ both go through
$\widetilde{\abox}$ and $\widetilde{\vertbox}$
at the 1-st and the 2-nd coordinates respectively.
Further, the tableaux $q_1$, $q_2$, $q_3$ and $q_4$
go through $\widetilde{\lambda^{(1)}}$, $\widetilde{\lambda^{(2)}}$,
$\widetilde{\lambda^{(2)}}$ and $\widetilde{\lambda^{(3)}}$
at the 3-rd coordinates respectively.
Then we have 
\[
	\rho_{\bfalpha}(s_2) (v_1\ v_2\ v_3\ v_4)
= (v_1\ v_2\ v_3\ v_4)
\begin{pmatrix}
1 & 0 & 0 & 0\\
0 & -1/2& 3/(4c) & 0\\
0 & c & 1/2 & 0\\
0 & 0 & 0 & -1
\end{pmatrix}.
\]
Here $v_i$ is the standard vector which corresponds to $q_i$.
\end{example}

\subsubsection*{Reductive Case}
Consider the case a tableau $P$ is reductive at $i$.
So far,
we do not have uniform description for $\rho_{\bfalpha}(s_i)$.
So we define $\rho_{\bfalpha}(s_1)$
and $\rho_{\bfalpha}(s_2)$ one by one.

First we define $\rho_{\bfalpha}(s_1)$.
For tableaux $p_1$ and $p_2$ of $\mathbb{T}(\bfalpha)$
which go through
$(\widetilde{\emptyset}, \widehat{\emptyset},
\widetilde{\emptyset}, \widehat{\emptyset}, \widetilde{\emptyset})$
and
$(\widetilde{\emptyset}, \widehat{\emptyset},
\widetilde{\abox}, \widehat{\emptyset}, \widetilde{\emptyset})$
at the 0-th, the $1-\frac{1}{2}$-th, the 1-st,
the $2-\frac{1}{2}$-th and the 2-nd coordinate
respectively,
let $u_1$ and $u_2$ be the corresponding standard vectors.
Then we define $\rho_{\bfalpha}(s_1)(u_1\ u_2)$ by
\[
\rho_{\bfalpha}(s_1)(u_1\ u_2)
= (u_1\ u_2)
\begin{pmatrix}
	1 & 0\\
	0 & 1
\end{pmatrix}.
\]

For tableaux $p_3$, $p_4$ and $p_5$ of $\mathbb{T}(\bfalpha)$
which go through
$(\widetilde{\emptyset}, \widehat{\emptyset},
\widetilde{\emptyset}, \widehat{\emptyset}, \widetilde{\abox})$,
$(\widetilde{\emptyset}, \widehat{\emptyset},
\widetilde{\abox}, \widehat{\emptyset}, \widetilde{\abox})$
and
$(\widetilde{\emptyset}, \widehat{\emptyset},
\widetilde{\abox}, \widehat{\abox}, \widetilde{\abox})$
at the 0-th, the $1-\frac{1}{2}$-th, the 1-st,
the $2-\frac{1}{2}$-th and the 2-nd coordinate
respectively,
let $u_3$, $u_4$ and $u_5$ be the corresponding standard vectors.
Then we define $\rho_{\bfalpha}(s_1)(u_1\ u_2\ u_3)$ by
\[
\rho_{\bfalpha}(s_1)(u_1\ u_2\ u_3)
= (u_1\ u_2\ u_3)
\begin{pmatrix}
	0 & 1 & 1\\
	\frac{1}{Q-1} & \frac{Q-2}{Q-1} & \frac{-1}{Q-1}\\
	\frac{Q-2}{Q-1} & -\frac{Q-2}{Q-1} & \frac{1}{Q-1}
\end{pmatrix}.
\]

Next we define $\rho_{\bfalpha}(s_2)$.
In the following, we write
\[
p = (\lambda^{(1)}, \lambda^{(2)}, \lambda^{(3)},
\lambda^{(4)}, \lambda^{(5)})
\]
to mean the tableau $p$
goes through $\lambda^{(1)}$, $\lambda^{(2)}$, $\lambda^{(3)}$,
$\lambda^{(4)}$, $\lambda^{(5)}$
at the 1-st, the $(2-\frac{1}{2})$-th,
the 2-nd, the $(3-\frac{1}{2})$-th
and the 3-rd coordinates respectively.

Suppose that
\[
\begin{array}{lr}
\begin{array}{rcl}
q_1 &=& (\widetilde{\emptyset}, \widehat{\emptyset},
\widetilde{\emptyset}, \widehat{\emptyset}, \widetilde{\emptyset}),\\
q_2 &=& (\widetilde{\abox}, \widehat{\emptyset},
\widetilde{\emptyset}, \widehat{\emptyset}, \widetilde{\emptyset}),\\
q_3 &=& (\widetilde{\emptyset}, \widehat{\emptyset},
\widetilde{\abox}, \widehat{\emptyset}, \widetilde{\emptyset}),
\end{array}
&
\begin{array}{rcl}
q_4 &=& (\widetilde{\abox}, \widehat{\emptyset},
\widetilde{\abox}, \widehat{\emptyset}, \widetilde{\emptyset}),\\
q_5 &=& (\widetilde{\abox}, \widehat{\abox},
\widetilde{\abox}, \widehat{\emptyset}, \widetilde{\emptyset}).
\end{array}
\end{array}
\]
Then for the standard vectors $(v_j)_{j=1}^5$
which correspond to $(q_j)_{j=1}^5$
we define $\rho_{\bfalpha}(s_2)(v_1\ v_2\ v_3\ v_4\ v_5)$ by
\[
\rho_{\bfalpha}(s_2)(v_1\ v_2\ v_3\ v_4\ v_5)
= (v_1\ v_2\ v_3\ v_4\ v_5)
\begin{pmatrix}
	1 & 0 & 0& 0 &0\\
	0 & 0 & 0& 1 &1\\
	0 & 0 & 1& 0 &0\\
	0 & \frac{1}{Q-1}   & 0& \frac{Q-2}{Q-1} &\frac{-1}{Q-1}\\
	0 & \frac{Q-2}{Q-1} & 0& -\frac{Q-2}{Q-1} &\frac{1}{Q-1}
\end{pmatrix}.
\]

Assume that
\[
\begin{array}{lr}
\begin{array}{rcl}
q_6 &=& (\widetilde{\emptyset}, \widehat{\emptyset},
\widetilde{\emptyset}, \widehat{\emptyset}, \widetilde{\abox}),\\
q_7 &=& (\widetilde{\abox}, \widehat{\emptyset},
\widetilde{\emptyset}, \widehat{\emptyset}, \widetilde{\abox}),\\
q_8 &=& (\widetilde{\emptyset}, \widehat{\emptyset},
\widetilde{\abox}, \widehat{\emptyset}, \widetilde{\abox}),\\
q_9 &=& (\widetilde{\abox}, \widehat{\emptyset},
\widetilde{\abox}, \widehat{\emptyset}, \widetilde{\abox}),\\
q_{10} &=& (\widetilde{\abox}, \widehat{\abox},
\widetilde{\abox}, \widehat{\emptyset}, \widetilde{\abox}),
\end{array}
&
\begin{array}{rcl}
q_{11} &=& (\widetilde{\emptyset}, \widehat{\emptyset},
\widetilde{\abox}, \widehat{\abox}, \widetilde{\abox}),\\
q_{12} &=& (\widetilde{\abox}, \widehat{\emptyset},
\widetilde{\abox}, \widehat{\abox}, \widetilde{\abox}),\\
q_{13} &=& (\widetilde{\abox}, \widehat{\abox},
\widetilde{\abox}, \widehat{\abox}, \widetilde{\abox}),\\
q_{14} &=& (\widetilde{\abox}, \widehat{\abox},
\widetilde{\horibox}, \widehat{\abox}, \widetilde{\abox}),\\
q_{15} &=& (\widetilde{\abox}, \widehat{\abox},
\widetilde{\vertbox}, \widehat{\abox}, \widetilde{\abox}).
\end{array}
\end{array}
\]
Then for the standard vectors $(v_j)_{j=6}^{15}$
which correspond to $(q_j)_{j=6}^{15}$
we define $\rho_{\bfalpha}(s_2)(v_6\ v_8\ v_{11})$ and
$\rho_{\bfalpha}(s_2)(v_7\ v_9\ v_{10} \ v_{12}\ v_{13}\ v_{14}\ v_{15})$
by
\[
\rho_{\bfalpha}(s_2)(v_6\ v_8\ v_{11})
= (v_6\ v_8\ v_{11})
\begin{pmatrix}
0&1&1\\
\noalign{\medskip}
\frac{1}{(Q-1)}
&\frac{Q-2}{Q-1}
&\frac{-1}{(Q-1)}\\
\noalign{\medskip}
\frac{Q-2}{Q-1}
&-\frac{Q-2}{Q-1}
&\frac{1}{Q-1}
\end{pmatrix}
\]
and
\[
\rho_{\bfalpha}(s_2)(v_7\ v_9\ v_{10}
\ v_{12}\ v_{13}\ v_{14}\ v_{15})
= (v_7\ v_9\ v_{10}
\ v_{12}\ v_{13}\ v_{14}\ v_{15})M_i
\]
Here the matrix $M_{i}$ is
\[
\begin{pmatrix}
\noalign{\medskip}
\frac{1}{Q-1}
&\frac{Q-2}{Q-1}
&\frac{-1}{Q-1}
&\frac{-1}{Q-1}
&\frac{1}{(Q-1)(Q-2)}
&\frac{(Q-1)(Q-2)-2}{2(Q-1)(Q-2)}
&-1/2\\
\noalign{\medskip}
\frac{Q-2}{(Q-1)^{2}}
&\frac{Q^2-3Q+3}{(Q-1)^2}
&\frac{1}{(Q-1)^2}
&\frac{1}{(Q-1)^2}
&\frac{-1}{(Q-1)^2(Q-2)}
&\frac{-Q(Q-3)}{2(Q-1)^2(Q-2)}
&\frac{1}{2(Q-1)}\\
\noalign{\medskip}
\frac{-(Q-2)}{(Q-1)^2}
&\frac{Q-2}{(Q-1)^2}
&\frac{-1}{(Q-1)^2}
&\frac{Q(Q-2)}{(Q-1)^2}
&\frac{1}{(Q-1)^2(Q-2)}
&\frac{Q(Q-3)}{2(Q-1)^2(Q-2)}
&\frac{-1}{2(Q-1)}\\
\noalign{\medskip}
\frac{-(Q-2)}{(Q-1)^2}
&\frac{Q-2}{(Q-1)^2}
&\frac{Q(Q-2)}{(Q-1)^2}
&\frac{-1}{(Q-1)^2}
&\frac{1}{(Q-1)^2(Q-2)}
&\frac {Q(Q-3)}{2(Q-1)^2(Q-2)}
&\frac{-1}{2(Q-1)}\\
\noalign{\medskip}
\frac{Q-2}{(Q-1)^2}
&\frac{-(Q-2)}{(Q-1)^2}
&\frac{1}{(Q-1)^2}
&\frac{1}{(Q-1)^2}
&\frac{(Q-1)^2(Q-2)-1}{(Q-1)^2(Q-2)}
&\frac{-Q(Q-3)}{2(Q-1)^2(Q-2)}
&\frac{1}{2(Q-1)}\\
\noalign{\medskip}
\frac{Q-2}{Q-1}
&\frac{-(Q-2)}{Q-1}
&\frac{1}{Q-1}
&\frac{1}{Q-1}
&\frac{-1}{(Q-1)(Q-2)}
&\frac{Q^2-3Q+4}{2(Q-1)(Q-2)}
&1/2\\
\noalign{\medskip}
\frac{-(Q-2)}{Q-1}
&\frac{Q-2}{Q-1}
&\frac{-1}{Q-1}
&\frac{-1}{Q-1}
&{\frac{1}{(Q-1)(Q-2)}}
&\frac{Q(Q-3)}{2(Q-1)(Q-2)}
&1/2
\end{pmatrix}.
\]
Next assume that
\[
\begin{array}{lr}
\begin{array}{rcl}
q_{16} &=& (\widetilde{\emptyset}, \widehat{\emptyset},
\widetilde{\abox}, \widehat{\abox}, \widetilde{\horibox}),\\
q_{17} &=& (\widetilde{\abox}, \widehat{\emptyset},
\widetilde{\abox}, \widehat{\abox}, \widetilde{\horibox}),\\
q_{18} &=& (\widetilde{\abox}, \widehat{\abox},
\widetilde{\abox}, \widehat{\abox}, \widetilde{\horibox}),
\end{array}
&
\begin{array}{rcl}
q_{19} &=& (\widetilde{\abox}, \widehat{\abox},
\widetilde{\horibox}, \widehat{\abox}, \widetilde{\horibox}),\\
q_{20} &=& (\widetilde{\abox}, \widehat{\abox},
\widetilde{\vertbox}, \widehat{\abox}, \widetilde{\horibox}),\\
q_{21} &=& (\widetilde{\abox}, \widehat{\abox},
\widetilde{\horibox}, \widehat{\horibox}, \widetilde{\horibox}).
\end{array}
\end{array}
\]
Then for the standard vectors $(v_j)_{j=16}^{21}$
which correspond to $(q_j)_{j=16}^{21}$
we define $\rho_{\bfalpha}(s_2)(v_{16}\ v_{17}\ v_{18}\ v_{19}\ v_{20}\ v_{21}
)$ by
\[
\rho_{\bfalpha}(s_2)(v_{16}\ v_{17}\ v_{18}\ v_{19}\ v_{20}\ v_{21})
= (v_{16}\ v_{17}\ v_{18}\ v_{19}\ v_{20}\ v_{21})M_i.
\]
Here the matrix $M_i$ is
\begin{eqnarray*}
\begin{pmatrix}
1
&0
&0
&0
&0
&0\\
\noalign{\medskip}0
&\frac{-1}{(Q-1)}
&\frac{1}{(Q-1)(Q-2)}
&\frac{Q(Q-3)}{2(Q-1)(Q-2)}
&-1/2
&0\\
\noalign{\medskip}0
&\frac{1}{(Q-1)}
&\frac{-1}{(Q-1)(Q-2)}
&\frac{{Q}^{2}-3\,Q+4}{2(Q-1)(Q-2)}
&1/2
&1\\
\noalign{\medskip}0
&1
&\frac{{Q}^{2}-3\,Q+4}{Q(Q-3)(Q-2)}
&\frac{(Q-1)( Q-4)}{2( Q-2)(Q-3)}
&\frac{(Q-1)(Q-4)}{2Q(Q-3)}
&\frac{-1}{(Q-3)}\\
\noalign{\medskip}0
&-1
&\frac{1}{(Q-2)}
&\frac {Q-4}{2(Q-2)}
&1/2
&\frac{-1}{(Q-1)}\\
\noalign{\medskip}0
&0
&\frac{(Q-1)^{2}(Q-4)}{Q(Q-3)(Q-2)}
&\frac{-(Q-1)(Q-4)}{2(Q-2)(Q-3)}
&\frac{-(Q-1)(Q-4)}{2Q(Q-3)}
&\frac{1}{(Q-3)}
\end{pmatrix}.
\end{eqnarray*}

Finally assume that
\[
\begin{array}{lr}
\begin{array}{rcl}
q_{22} &=& (\widetilde{\emptyset}, \widehat{\emptyset},
\widetilde{\abox}, \widehat{\abox}, \widetilde{\vertbox}),\\
q_{23} &=& (\widetilde{\abox}, \widehat{\emptyset},
\widetilde{\abox}, \widehat{\abox}, \widetilde{\vertbox}),\\
q_{24} &=& (\widetilde{\abox}, \widehat{\abox},
\widetilde{\abox}, \widehat{\abox}, \widetilde{\vertbox}),
\end{array}
&
\begin{array}{rcl}
q_{25} &=& (\widetilde{\abox}, \widehat{\abox},
\widetilde{\horibox}, \widehat{\abox}, \widetilde{\vertbox}),\\
q_{26} &=& (\widetilde{\abox}, \widehat{\abox},
\widetilde{\vertbox}, \widehat{\abox}, \widetilde{\vertbox}),\\
q_{27} &=& (\widetilde{\abox}, \widehat{\abox},
\widetilde{\horibox}, \widehat{\vertbox}, \widetilde{\vertbox}).
\end{array}
\end{array}
\]
Then for the standard vectors $(v_j)_{j=22}^{27}$
which correspond to $(q_j)_{j=22}^{27}$
we define $\rho_{\bfalpha}(s_2)(v_{22}\ v_{23}\ v_{24}\ v_{25}\ v_{26}\ v_{27}
)$ by
\[
\rho_{\bfalpha}(s_2)(v_{22}\ v_{23}\ v_{24}\ v_{25}\ v_{26}\ v_{27})
= (v_{22}\ v_{23}\ v_{24}\ v_{25}\ v_{26}\ v_{27})M_i.
\]
Here the matrix $M_i$ is
\begin{eqnarray*}
\begin{pmatrix}
-1
&0
&0
&0
&0
&0\\
\noalign{\medskip}
0
&\frac{1}{(Q-1)}
&\frac{-1}{(Q-1)(Q-2)}
&\frac{-Q(Q-3)}{2(Q-1)( Q-2 )}
&1/2
&0\\
\noalign{\medskip}
0
&\frac{-1}{(Q-1)}
&\frac{1}{(Q-1)(Q-2)}
&\frac{Q(Q-3)}{2(Q-1)(Q-2)}
&1/2
&1\\
\noalign{\medskip}
0
&-1
&\frac{1}{(Q-2)}
&\frac{Q-4}{2(Q-2)}
&1/2
&\frac{-1}{(Q-3)}
\\
\noalign{\medskip}
0
&1
&\frac{1}{(Q-2)}
&\frac{Q(Q-3)}{2(Q-1)(Q-2)}
&\frac{Q-3}{2(Q-1)}
&\frac{-1}{(Q-1)}\\
\noalign{\medskip}
0
&0
&\frac{Q-3}{Q-2}
&\frac{-Q(Q-3)}{2(Q-1)(Q-2)}
&\frac{-(Q-3)}{2(Q-1)}
&\frac{1}{(Q-1)}
\end{pmatrix}.
\end{eqnarray*}

\section{Discussion}\label{sec:dec}

In the previous section, we gave linear maps
$\rho_{\bfalpha}(e_i)$ and
$\rho_{\bfalpha}(f_i)$
for all the tableaux on $\Gamma_n$.
and defined
$\rho_{\bfalpha}(s_i)$ for non-reductive tableaux on $\Gamma_n$.
We also defined $\rho_{\bfalpha}(s_1)$
and $\rho_{\bfalpha}(s_2)$ for the reductive tableaux on $\Gamma_n$. 
(So far, we have further obtained $\rho_{\bfalpha}(s_3)$
for almost all reductive tableaux on $\Gamma_4$.)
These linear maps preserve the relations
in Theorem~1.2 and Theorem~5.3.
Hence they give representations of $A_n(Q)$ for
all $\bfalpha\in\bfLambda_n$
($n = 2-\frac{1}{2}, 2, 3-\frac{1}{2}, 3, 4-\frac{1}{2}$)
and for almost all $\bfalpha\in\bfLambda_4$.

These representations also coincide with the ones calculated
through the Murphy's operators which are introduced
in the paper~\cite{HR} and programmed by Naruse.
Moreover, the traces of the representation matrices above
coincide with the ``characters'' which is defined
by Naruse in the paper~\cite{Na1}.
This means that
the representations we have presented in this note
will be irreducible and define Young's seminormal form
representations of the partition algebras $A_n(Q)$.


\vspace*{2.0cm}
\noindent
Department of Mathematical Sciences\\
Faculty of Science\\
University of the Ryukyus\\
Nishihara-cho, Okinawa 903-0213\\
JAPAN\\
\\
kosuda@math.u-ryukyu.ac.jp

\end{document}